\documentclass{article}
\usepackage{amssymb}


\newtheorem{theorem}{Theorem}[section]

\newtheorem{corollary}[theorem]{Corollary}

\newtheorem{definition}[theorem]{Definition}

\newtheorem{lemma}[theorem]{Lemma}

\newenvironment{proof}[1][Proof]{\noindent\textbf{#1.} }{\ \rule{0.5em}{0.5em}}
\input{tcilatex}
\begin{document}

\title{The group of quasi-isometries of the real line cannot act effectively
on the line}
\author{Shengkui Ye, Yanxin Zhao}
\maketitle

\begin{abstract}
We prove that the group $\mathrm{QI}^{+}(\mathbb{R})$ of
orientation-preserving quasi-isometries of the real line is a
left-orderable, non-simple group, which cannot act effectively on the real
line $\mathbb{R}.$
\end{abstract}

\section{Introduction}

A function $f:X\rightarrow Y$ between metric spaces $X,Y$ is a
quasi-isometry, if there exist real numbers $K\geq 1$, $C\geq 0$ such that 
\[
\frac{1}{K}d(x_{1},x_{2})-C\leq d(f(x_{1}),f(x_{2}))\leq Kd(x_{1},x_{2})+C, 
\]%
for any $x_{1},x_{2}\in X$ and $d(\func{Im}f,y)\leq C$ for any $y\in Y.$ Two
quasi-isometries $f,g$ are called equivalent if they are of bounded distance 
$\sup_{x\in X}d(f(x),g(x))<\infty $. The quasi-isometry group $\mathrm{QI}%
(X) $ is the group of all equivalence classes $[f]$ of quasi-isometries $%
f:X\rightarrow X$ under composition. The notion of quasi-isometries is one
of the fundamental concepts in geometric group theory. In this note, we
consider the quasi-isometry group $\mathrm{QI}(\mathbb{R})$ of the real
line. Gromov and Pansu \cite{gp} (Section 3.3B) noted that the group of
bi-Lipschitz homeomorphisms has a full image in $\mathrm{QI}(\mathbb{R})$.
Sankaran \cite{sar} proved that the orientation-preserving subgroup $\mathrm{%
QI}^{+}(\mathbb{R})$ is torsion-free and many large groups, like Thompson
groups, free groups of infinite rank, can be embedded into $\mathrm{QI}^{+}(%
\mathbb{R}).$

Recall that a group $G$ is left-orderable if there is a total order $\leq $
on $G$ such that $g\leq h$ implies $fg\leq fh$ for any $f\in G.$ We will
prove the following.

\begin{theorem}
\label{th0}The quasi-isometry group $\mathrm{QI}^{+}(\mathbb{R})$ (or $%
\mathrm{QI}([0,+\infty ))$) is not simple.
\end{theorem}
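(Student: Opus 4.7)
The plan is to construct a proper non-trivial normal subgroup of $\mathrm{QI}^+(\mathbb{R})$, exploiting the two-ended geometry of $\mathbb{R}$: QI-equivalence is a single global condition (bounded distance everywhere), but a QI can be ``trivial at one end while nontrivial at the other''. Concretely, I define
\[
N := \{\,[f] \in \mathrm{QI}^+(\mathbb{R}) : \sup_{x \geq T} |f(x) - x| < \infty \text{ for some } T > 0\,\},
\]
which is well-defined on equivalence classes because replacing $f$ by a map at bounded distance changes the supremum by a bounded amount. Intuitively, $N$ consists of the classes that ``look like the identity at $+\infty$''.

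First I would check that $N$ is a subgroup. Thanks to the Gromov--Pansu remark recorded in the introduction, I may work with bi-Lipschitz representatives throughout. Closure under composition is routine: if $f(x)$ and $g(x)$ are each within bounded distance of $x$ on respective half-lines, then for $x$ large enough $g(x)$ itself lies in the good region of $f$, so $f \circ g(x) \approx g(x) \approx x$. For closure under inverses, if $f$ is a bi-Lipschitz homeomorphism with $|f(x) - x| \le C$ on $[T, \infty)$, setting $y = f(x)$ gives $|f^{-1}(y) - y| = |x - f(x)| \le C$ on a suitable half-line.

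The key step is normality. Given $[g] \in \mathrm{QI}^+(\mathbb{R})$ with bi-Lipschitz representative of constant $K$, and $[f] \in N$ satisfying $|f(x)-x| \le C$ for $x \ge T$, I fix large $y$ and set $x := g^{-1}(y)$. Orientation-preservation forces $x \to \infty$ as $y \to \infty$, so eventually $x \ge T$ and $|f(x) - x| \le C$. The bi-Lipschitz bound then yields
\[
|gfg^{-1}(y) - y| = |g(f(x)) - g(x)| \le K|f(x) - x| + C' \le KC + C',
\]
bounded independently of $y$, so $[gfg^{-1}] \in N$.

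To finish, the piecewise linear map $\varphi$ defined by $\varphi(x) = x$ for $x \ge 0$ and $\varphi(x) = 2x$ for $x \le 0$ is a bi-Lipschitz orientation-preserving homeomorphism whose class lies in $N$ (it agrees with $\mathrm{id}$ on $[0,\infty)$) but is nontrivial since $\varphi(x) - x = x$ is unbounded as $x \to -\infty$; while $x \mapsto 2x$ represents a class outside $N$ since $|2x - x| = x$ is unbounded on every $[T, \infty)$. The only real obstacle I foresee is adapting the argument to the parenthetical case of $\mathrm{QI}([0,+\infty))$: a half-line has only one end, so the ``kill one end'' construction collapses to the trivial subgroup and some different invariant---perhaps an asymptotic scaling at infinity, or a germ-type quotient---will be needed instead.
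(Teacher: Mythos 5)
Your argument for $\mathrm{QI}^{+}(\mathbb{R})$ itself is correct, and it takes a genuinely different (and more elementary) route than the paper. Your subgroup $N$ of classes that are bounded-distance from the identity near $+\infty$ is exactly the direct factor $\mathrm{QI}(\mathbb{R}_{-})$ in the paper's decomposition $\mathrm{QI}^{+}(\mathbb{R})=\mathrm{QI}(\mathbb{R}_{+})\times \mathrm{QI}(\mathbb{R}_{-})$ (Lemma \ref{lem1}); normality holds because every orientation-preserving quasi-isometry fixes the two ends of the line, and your verification of closure, inverses, conjugation, properness and nontriviality is sound. What this buys is a one-line conceptual reason for non-simplicity of $\mathrm{QI}^{+}(\mathbb{R})$: a nontrivial direct product is never simple. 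What it does not buy is the parenthetical half of the statement, and you correctly diagnose why: $[0,+\infty)$ has a single end, so the ``kill one end'' subgroup degenerates.

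That remaining case is the substantive content of the theorem, and it is a genuine gap in your proposal as a proof of the stated result. The paper's normal subgroup is precisely the ``asymptotic scaling'' invariant you speculate about: $H=\{[f]\in \mathrm{QI}(\mathbb{R}_{+})\mid \lim_{x\rightarrow \infty }\frac{f(x)-x}{x}=0\}$, the classes with sublinear displacement. Normality follows from the quasi-isometry inequality $|g^{-1}(f(y))-g^{-1}(y)|\leq K|f(y)-y|+C$ together with the fact that $g(x)/x$ stays bounded; properness is witnessed by $x\mapsto 2x$ and nontriviality by $x\mapsto x+\ln (x+1)$. Since $H$ lives inside $\mathrm{QI}(\mathbb{R}_{+})$, this single construction settles both groups at once, whereas your $N$ settles only the two-ended one. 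To complete your proof you would need to add an argument of this kind for the ray.
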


\begin{theorem}
\label{th1}The quasi-isometry group $\mathrm{QI}^{+}(\mathbb{R})$ (or $%
\mathrm{QI}([0,+\infty ))$) is left-orderable.
\end{theorem}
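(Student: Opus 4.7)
The strategy has two stages: first reduce the two-sided case to the one-sided case $\mathrm{QI}([0,+\infty))$ via an embedding, then construct a left-order on $\mathrm{QI}([0,+\infty))$.

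For the reduction, any orientation-preserving quasi-isometry $f$ of $\mathbb{R}$ is coercive (by the QI lower bound), so $f(x)\to\pm\infty$ as $x\to\pm\infty$ and $f$ has well-defined germs at both ends. Passing to these germs yields a group homomorphism
\[
\Phi\colon \mathrm{QI}^+(\mathbb{R}) \longrightarrow \mathrm{QI}([0,+\infty)) \times \mathrm{QI}([0,+\infty)),
\]
after identifying $(-\infty,0]$ with $[0,+\infty)$ by reflection. If both germs of $[f]$ are trivial, then $f$ is at bounded distance from the identity on each ray, hence on all of $\mathbb{R}$, so $\Phi$ is injective. Since left-orderability passes to subgroups and direct products, it suffices to prove $\mathrm{QI}([0,+\infty))$ is left-orderable.

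To left-order $\mathrm{QI}([0,+\infty))$, I would first invoke Gromov--Pansu to represent each class by an orientation-preserving bi-Lipschitz homeomorphism $f\colon [0,+\infty)\to [0,+\infty)$; two such representatives are equivalent iff their sup-distance is finite. A natural first candidate for a positive cone is
\[
P_0 := \bigl\{[f] : \liminf_{x\to\infty}(f(x)-x) = +\infty\bigr\},
\]
which is well-defined on equivalence classes, closed under multiplication (since $fg-\mathrm{id}=(f\circ g-g)+(g-\mathrm{id})$ has both summands tending to $+\infty$), and disjoint from its inverse (since $\liminf(f-\mathrm{id})=+\infty$ forces $\liminf(f^{-1}-\mathrm{id})=-\infty$). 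However, $P_0$ is not total: a bi-Lipschitz homeomorphism like $f(x)=x+(\log x)\sin(\sqrt{\log x})$ has $f-\mathrm{id}$ unbounded both above and below, so neither $[f]$ nor $[f^{-1}]$ lies in $P_0$.

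The hard part is extending $P_0$ to a full positive cone covering the remaining \emph{oscillating} elements. My plan is to use the classical criterion that a group $G$ is left-orderable iff for every finite subset $\{g_1,\dots,g_n\}\subseteq G\setminus\{e\}$ there exist signs $\epsilon_i\in\{\pm 1\}$ with $e\notin\langle g_1^{\epsilon_1},\dots,g_n^{\epsilon_n}\rangle^+$. Given finitely many non-identity classes with bi-Lipschitz representatives $f_1,\dots,f_n$, I would inductively build a sequence $x_k\to\infty$ along which, for every word $w$ in the $f_i^{\pm 1}$ of length at most $k$, the displacement $w(x_k)-x_k$ has a definite sign of magnitude at least $k$, and then choose each $\epsilon_i$ to be the eventual sign of $f_i^{\epsilon_i}(x_k)-x_k$. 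The main obstacle is that a composition $f\circ g$ is evaluated on $g(x_k)$ rather than $x_k$, so the inductive construction must arrange the sequence to be simultaneously generic at all descendant points $w(x_k)$; threading this through every admissible word will require a careful diagonal/compactness argument.
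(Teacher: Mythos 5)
Your reduction to $\mathrm{QI}([0,+\infty))$ and your choice of criterion (find signs $\epsilon_i$ so that the identity is not in the semigroup generated by the $f_i^{\epsilon_i}$) match the paper's route, and starting from the fact that every nontrivial class has $\sup_x|f(x)-x|=\infty$ is correct. But the core of the argument is left unresolved, and the specific plan you sketch for it cannot work as stated. You ask for a \emph{single} sequence $x_k\to\infty$ along which every word of length at most $k$ --- in particular every generator --- has displacement of definite sign and magnitude at least $k$. No such sequence need exist: two nontrivial classes can be represented by homeomorphisms that differ from the identity only on disjoint unions of intervals marching to infinity, say $f_1=\mathrm{id}$ outside $\bigcup_n[2^{2n},2^{2n+1}]$ and $f_2=\mathrm{id}$ outside $\bigcup_n[2^{2n+1},2^{2n+2}]$ (each pushing points by an unbounded amount inside its own intervals, with bounded slopes). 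Then no point displaces under both maps, so no single sequence witnesses the nontriviality of both. The paper's fix is a finite filtration: choose a sequence $\{x_{1,k}\}$ witnessing $f_1$, pass to a subsequence along which each $f_i$ has displacement tending to $+\infty$, tending to $-\infty$, or bounded; assign signs in the first two cases, collect the bounded elements into a set $S_1$, and recurse on $S_1$ with a fresh sequence. At most $n$ sequences are needed, and a given word $w$ is tested against the sequence attached to the first level of the filtration that fails to contain all its letters.

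Second, the ``main obstacle'' you identify --- that $f\circ g$ is evaluated at $g(x_k)$ rather than at $x_k$ --- is not to be resolved by making the sequence generic at all descendant points; it is resolved by the quasi-isometry inequality itself, which your sketch never invokes. For orientation-preserving quasi-isometries, $g(x_k)\ge x_k$ gives $f(g(x_k))\ge f(x_k)+\frac{1}{K}\bigl(g(x_k)-x_k\bigr)-C$, so if every letter of the word has displacement along $\{x_k\}$ bounded below and at least one letter has displacement tending to $+\infty$, then the composite's displacement at $x_k$ tends to $+\infty$; combined with the estimate $f^{-1}(x_k)-x_k\le -\bigl(\frac{1}{K}(f(x_k)-x_k)-C\bigr)$, which shows inverses move points far in the opposite direction, this is all that is needed, with no genericity or diagonal/compactness argument. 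Your positive-cone aside $P_0$ is correct as far as it goes but, as you yourself note, is not total and does not feed into the rest of the proof.
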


\begin{theorem}
\label{th2}The quasi-isometry group $\mathrm{QI}^{+}(\mathbb{R})$ cannot act
effectively on the real line $\mathbb{R}.$
\end{theorem}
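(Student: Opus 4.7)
The plan is to argue by contradiction: suppose there is an injective homomorphism $\phi\colon\mathrm{QI}^{+}(\mathbb{R})\to\mathrm{Homeo}(\mathbb{R})$, and extract a contradiction from the earlier structural results.

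First I would reduce to an orientation-preserving action. Composing $\phi$ with the orientation character $\mathrm{Homeo}(\mathbb{R})\to\mathbb{Z}/2$ gives a homomorphism $\mathrm{QI}^{+}(\mathbb{R})\to\mathbb{Z}/2$ whose kernel $G_{0}$ has index at most two and satisfies $\phi(G_{0})\leq\mathrm{Homeo}^{+}(\mathbb{R})$. Since $G_{0}$ is still left-orderable, still torsion-free, and still carries proper nontrivial normal subgroups coming from those of $\mathrm{QI}^{+}(\mathbb{R})$ by intersection, it suffices to rule out an injection $G_{0}\hookrightarrow\mathrm{Homeo}^{+}(\mathbb{R})$, and I may assume the action is orientation-preserving to begin with.

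The central task is then to locate a feature of $\mathrm{QI}^{+}(\mathbb{R})$ that is incompatible with being a subgroup of $\mathrm{Homeo}^{+}(\mathbb{R})$. Because every \emph{countable} left-orderable group embeds in $\mathrm{Homeo}^{+}(\mathbb{R})$, the obstruction must be of uncountable nature. The most promising candidate is an explicit abelian subgroup $A\leq\mathrm{QI}^{+}(\mathbb{R})$ extracted from ``asymptotic slopes'' of bi-Lipschitz representatives --- for example, classes of piecewise-linear homeomorphisms with prescribed, linearly independent slopes on a nested sequence of half-lines --- chosen so that the order on $A$ induced from the left-order of Theorem \ref{th1} is non-Archimedean. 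H\"{o}lder's theorem then says that every abelian subgroup of $\mathrm{Homeo}^{+}(\mathbb{R})$ acting freely embeds in $(\mathbb{R},+)$ and hence is Archimedean. By studying the fixed-point set of $\phi(A)$ and iterating on maximal $A$-invariant arcs, using the normal-subgroup decomposition from Theorem \ref{th0} to keep enough structure at each step, one should reach a contradiction between the non-Archimedean order on $A$ and its realization inside $\mathrm{Homeo}^{+}(\mathbb{R})$.

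The main obstacle is precisely the fixed-point bookkeeping: a priori $\phi(A)$ may have global fixed points, which only lets me restrict to a subinterval, and I must ensure the restriction procedure terminates with a genuine element of $\ker\phi\setminus\{1\}$ rather than a nested intersection that collapses to a point. An alternative and possibly cleaner route is to use Theorem \ref{th0} more directly: the non-simplicity proof identifies an explicit proper normal subgroup $N\triangleleft\mathrm{QI}^{+}(\mathbb{R})$ (presumably of quasi-isometries that are, in some asymptotic sense, trivial), and one would show that $\phi(N)$ must centralize a sufficiently rich subset of $\phi(\mathrm{QI}^{+}(\mathbb{R}))$ in $\mathrm{Homeo}^{+}(\mathbb{R})$ and therefore collapse, directly violating $\ker\phi=\{1\}$. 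The delicate step in either approach is translating the bounded-distance equivalence governing $\mathrm{QI}^{+}(\mathbb{R})$ (where translations are trivial) into rigid dynamical information in $\mathrm{Homeo}^{+}(\mathbb{R})$ (where they are not).
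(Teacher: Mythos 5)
Your proposal does not close. The central idea you lean on --- an abelian subgroup $A\leq\mathrm{QI}^{+}(\mathbb{R})$ whose induced left-order is non-Archimedean, played off against H\"{o}lder's theorem --- is not actually an obstruction to acting on the line. H\"{o}lder's theorem only applies to \emph{free} actions, and an abelian group acting on $\mathbb{R}$ need not act freely on any invariant interval: for instance $(\mathbb{R},+)\oplus(\mathbb{R},+)$ acts faithfully on $\mathbb{R}$ by letting the two factors act by (conjugated) translations on two disjoint intervals, and the induced order is non-Archimedean. So the ``fixed-point bookkeeping'' you flag as the main obstacle is not a technicality; without an additional mechanism forcing two independent one-parameter summands to act freely \emph{on the same} component, the nested-interval procedure you describe simply terminates in configurations like the one above and yields no contradiction. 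Your alternative route (showing $\phi(N)$ collapses for the normal subgroup $N$ of Theorem \ref{th0}) is stated only as a hope, with no indicated reason why $\phi(N)$ should centralize anything.

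The paper's actual argument supplies exactly the missing mechanism, and it is not something one can conjure from left-orderability and H\"{o}lder alone. It embeds into $\mathrm{QI}^{+}(\mathbb{R})$ the explicit group $\mathbb{R}_{>0}\ltimes(\bigoplus_{i\in\mathbb{R}_{\geq 1}}\mathbb{R})$ of Lemma \ref{lema1}, in which uncountably many commuting one-parameter groups $B_{i,\cdot}$ are each normalized by a single one-parameter group $A_{t}$ via $A_{t}B_{i,s}A_{t}^{-1}=B_{i,st^{i/(i+1)}}$. It then uses the rigidity of actions of the lift of $\mathrm{PSL}_{2}(\mathbb{R})$ (Matsumoto/Militon, Lemma \ref{mili}) to force the action of $\langle A_{t}\rangle$ to be a \emph{continuous} topologically diagonal family of translations; the affine relations together with Mann's Lemma \ref{lema2} then force each $B_{i,s}$ to act freely on some component $I_{i}$; uncountability of the index set $\mathbb{R}_{\geq 1}$ forces $I_{i}=I_{j}$ for some $i\neq j$; and finally H\"{o}lder's homomorphism, which by the continuity of $A_{t}$ must be $\mathbb{R}$-linear on each summand (Corollary \ref{cor}, Lemma \ref{cont}), cannot be injective on $\mathbb{R}\oplus\mathbb{R}$. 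The decisive external input --- the $\widetilde{\mathrm{PSL}_{2}(\mathbb{R})}$ rigidity that converts an abstract injection into continuous dynamical data --- is absent from your sketch, and without it (or a substitute) the approach does not go through.
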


Other (uncountable) left-orderable groups, but cannot act on the line have
been known to Mann and C. Rivas \cite{mann} (the germ group $\mathcal{G}%
_{\infty }(\mathbb{R})$), Chen-Mann \cite{cm} (the compact supported
diffeomorphism group $\mathrm{Diff}_{c}(\mathbb{R}^{n}),n>1$), and so on.

\section{The group structure of $\mathrm{QI}(\mathbb{R})$}

Let $\mathrm{QI}(\mathbb{R}_{+})$ (resp. $\mathrm{QI}(\mathbb{R}_{-})$) be
the quasi-isometry group of the ray $[0,+\infty )$ (resp. $(-\infty ,0]$),
viewed as subgroup of $\mathrm{QI}(\mathbb{R})$ fixing the negative part.

\begin{lemma}
\label{lem1}$\mathrm{QI}(\mathbb{R})=(\mathrm{QI}(\mathbb{R}_{+})\times 
\mathrm{QI}(\mathbb{R}_{-}))\rtimes \langle t\rangle ,$ where $t\in \mathrm{%
QI}(\mathbb{R})$ is the reflection $t(x)=-x$ for any $x\in \mathbb{R}.$
\end{lemma}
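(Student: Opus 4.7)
The plan is to produce an orientation homomorphism $\sigma: \mathrm{QI}(\mathbb{R}) \to \mathbb{Z}/2$ split by $\langle t \rangle$, and then decompose its kernel $\mathrm{QI}^+(\mathbb{R})$ as an internal direct product via restriction to the two rays.

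First I would check that every quasi-isometry $f$ of $\mathbb{R}$ has a well-defined orientation. The lower bound $|f(x_1)-f(x_2)| \geq \frac{1}{K}|x_1-x_2| - C$ together with the coarse surjectivity of $f$ forces one end of $\mathbb{R}$ to map coarsely to $+\infty$ and the other coarsely to $-\infty$; the resulting sign $\sigma(f) \in \{\pm 1\}$ is invariant under bounded perturbation and multiplicative under composition, so $\sigma$ descends to a homomorphism $\mathrm{QI}(\mathbb{R}) \to \mathbb{Z}/2$ with kernel $\mathrm{QI}^+(\mathbb{R})$. Since $\sigma(t) = -1$, the cyclic subgroup $\langle t \rangle$ provides a splitting.

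Next, given $[f] \in \mathrm{QI}^+(\mathbb{R})$, I would exhibit a factorization $[f] = [f_+][f_-]$ with $f_\pm \in \mathrm{QI}(\mathbb{R}_\pm)$ (viewed as the subgroup of $\mathrm{QI}(\mathbb{R})$ fixing the opposite ray pointwise). Since $f$ is orientation-preserving, the images $f(\mathbb{R}_\pm)$ lie within a bounded neighborhood of $\mathbb{R}_\pm$; truncating $f$ at $0$ yields an equivalent representative $\tilde{f}$ that preserves each ray, and I set $f_\pm$ equal to $\tilde{f}$ on $\mathbb{R}_\pm$ and to the identity on the opposite ray. These two quasi-isometries commute (each acts trivially on the other's support), their product equals $\tilde{f} \sim f$, and the associated map $\Phi: [f] \mapsto ([f_+],[f_-])$ is bijective: injectivity is immediate from the restrictions, while surjectivity follows by gluing any pair $(g_+, g_-)$ into a map on $\mathbb{R}$ and verifying the quasi-isometry inequality (the only non-trivial case being $x_1 < 0 < x_2$, which reduces to adding the two one-sided estimates).

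Finally I would check that conjugation by $t$ interchanges the factors $\mathrm{QI}(\mathbb{R}_+)$ and $\mathrm{QI}(\mathbb{R}_-)$: if $h \in \mathrm{QI}(\mathbb{R}_+)$, then $tht$ fixes $\mathbb{R}_+$ pointwise and acts on $\mathbb{R}_-$ by $x \mapsto -h(-x)$, so $tht \in \mathrm{QI}(\mathbb{R}_-)$. The main technical point I expect to dwell on is showing that $\Phi$ is actually a homomorphism: pointwise, $(g \circ f)|_{\mathbb{R}_+}$ need not equal $(g|_{\mathbb{R}_+}) \circ (f|_{\mathbb{R}_+})$, so one must use that $f|_{\mathbb{R}_+}$ takes values outside $\mathbb{R}_+$ only on a set of bounded diameter, ensuring that the discrepancy introduced by truncation remains bounded under post-composition.
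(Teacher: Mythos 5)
Your argument is correct, but it takes a genuinely different route from the paper. The paper's proof is essentially a corollary of Sankaran's theorem that the group $PL_{\delta }(\mathbb{R})$ of piecewise-linear homeomorphisms with bounded slopes has full image in $\mathrm{QI}(\mathbb{R})$: one normalizes to representatives fixing $0$, splits each such homeomorphism into its restrictions to the two rays (extended by the identity), and observes that the two factors intersect trivially and are swapped by $t$. All the coarse-geometric issues you worry about (truncation, gluing, the homomorphism property of $\Phi $) disappear because the representatives are genuine homeomorphisms fixing the origin. You instead work directly with arbitrary quasi-isometries: you must first establish that orientation is well defined on equivalence classes (which the paper uses implicitly in writing $\mathrm{QI}^{+}(\mathbb{R})$ but never proves), then produce ray-preserving representatives by hand and control the bounded errors introduced by truncation. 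What your approach buys is self-containedness --- no appeal to Sankaran --- and an explicit verification of the orientation homomorphism; what the paper's approach buys is brevity and a stock of concrete $PL$ representatives that it reuses throughout the rest of the article. One small caution: your assertion that $f(\mathbb{R}_{\pm })$ lies in a bounded neighborhood of $\mathbb{R}_{\pm }$ is only uniform after first replacing $f$ by $f-f(0)$ (otherwise the bound depends on $|f(0)|$); since you truncate at $0$ anyway this is harmless, but it should be said. Likewise, once ray-preserving representatives are fixed, the identity $(g\circ f)|_{\mathbb{R}_{+}}=(g|_{\mathbb{R}_{+}})\circ (f|_{\mathbb{R}_{+}})$ holds exactly, so the only genuine discrepancy in the homomorphism check is between the truncation of a composite and the composite of truncations, which is bounded as you indicate.
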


\begin{proof}
Sankaran \cite{sar} proves that the group $PL_{\delta }(\mathbb{R})$
consisting of piecewise linear homeomorphisms with bounded slopes has a full
image in $\mathrm{QI}(\mathbb{R}).$ Since every homeomorphism $f\in $ $%
PL_{\delta }(\mathbb{R})$ is of bounded distance to the map $f-f(0)\in
PL_{\delta }(\mathbb{R}),$ we see that the subgroup 
\[
PL_{\delta ,0}(\mathbb{R})=\{f\in PL_{\delta }(\mathbb{R})\mid f(0)=0\} 
\]%
also has full image in $\mathrm{QI}(\mathbb{R}).$ Let%
\begin{eqnarray*}
PL_{\delta ,+}(\mathbb{R}) &=&\{f\in PL_{\delta }(\mathbb{R})\mid
f(x)=x,x\leq 0\}, \\
PL_{\delta ,-}(\mathbb{R}) &=&\{f\in PL_{\delta }(\mathbb{R})\mid
f(x)=x,x\geq 0\}.
\end{eqnarray*}%
Since $PL_{\delta ,+}(\mathbb{R})\cap PL_{\delta ,-}(\mathbb{R})=\{\mathrm{id%
}_{\mathbb{R}}\},$ we see that $PL_{\delta ,+}(\mathbb{R})\times PL_{\delta
,-}(\mathbb{R})$ has a full image in $\mathrm{QI}^{+}(\mathbb{R}),$ the
orientation-preserving subgroup of $\mathrm{QI}(\mathbb{R}).$ It's obvious
that $PL_{\delta ,+}(\mathbb{R})$ (resp. $PL_{\delta ,-}(\mathbb{R})$) has a
full image in $\mathrm{QI}(\mathbb{R}_{+})$ (resp. $\mathrm{QI}(\mathbb{R}%
_{-})$). Therefore, $\mathrm{QI}(\mathbb{R})=(\mathrm{QI}(\mathbb{R}%
_{+})\times \mathrm{QI}(\mathbb{R}_{-}))\rtimes \langle t\rangle $ $.$
\end{proof}

\bigskip

Let $\mathrm{Homeo}_{+}(\mathbb{R})$ be the group of orientation-preserving
homeomorphisms of the real line. Two functions $f,g\in \mathrm{Homeo}_{+}(%
\mathbb{R})$ are of bounded distance if $\sup_{|x|\geq M}|f(x)-g(x)|<\infty $
for sufficiently large real number $M.$ This means when we study elements $%
[f]$ in $\mathrm{QI}(\mathbb{R}),$ we don't need to care too much about the
function values $f(x)$ for $x$ with small absolute values. We will
implicitly use this fact in the following context. As $PL_{\delta }(\mathbb{R%
})$ has a full image in $\mathrm{QI}(\mathbb{R})$ (by Sankaran \cite{sar}),
we take representatives of quasi-isometries which are homeomorphisms in the
rest of the article.

\subsection{$\mathrm{QI}(\mathbb{R}_{+})$ is not simple}

Let $\mathrm{QI}(\mathbb{R}_{+})$ be the quasi-isometry group of the
half-line $[0,+\infty )$. Note that the quasi-isometry group $\mathrm{QI}%
^{+}(\mathbb{R})$ $=\mathrm{QI}(\mathbb{R}_{+})\times \mathrm{QI}(\mathbb{R}%
_{-})$ and $\mathrm{QI}(\mathbb{R}_{+})\cong \mathrm{QI}(\mathbb{R}_{-}),$
by Lemma \ref{lem1}. Let $H=\{[f]\in \mathrm{QI}(\mathbb{R}_{+})\mid f$ is
quasi-isometric such that $\lim_{x\rightarrow \infty }\frac{f(x)-x}{x}=0\}.$
Theorem \ref{th0} follows from the following theorem.

\begin{theorem}
The $H$ is a proper normal subgroup of $\mathrm{QI}(\mathbb{R}_{+}).$ In
particular, $\mathrm{QI}(\mathbb{R}_{+})$ is not simple.
\end{theorem}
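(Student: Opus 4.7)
The plan is to check each of the three things implicit in the statement: $H$ is well-defined on $\mathrm{QI}$-classes, $H$ is a proper subgroup, and $H$ is normal. First I would observe that the defining condition $\lim_{x\to\infty}\frac{f(x)-x}{x}=0$ is stable under bounded perturbation: if $\sup_{x}|f(x)-f'(x)|<\infty$, then $\frac{f(x)-f'(x)}{x}\to 0$, so the condition depends only on the equivalence class $[f]$. For properness, take $f(x)=2x$, which is a quasi-isometry with $\frac{f(x)-x}{x}=1\not\to 0$, certifying $[f]\in \mathrm{QI}(\mathbb{R}_{+})\setminus H$.

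Next I would verify that $H$ is a subgroup. The identity is in $H$ trivially. For closure under composition, given $f,g\in H$, write
\[
\frac{f(g(x))-x}{x}=\frac{f(g(x))-g(x)}{g(x)}\cdot\frac{g(x)}{x}+\frac{g(x)-x}{x}.
\]
The last term tends to $0$; the ratio $g(x)/x$ tends to $1$ because $g\in H$ (hence is bounded), and since $g(x)\to\infty$ the first factor tends to $0$ by applying the definition of $H$ to $f$. For closure under inverses (taking a homeomorphic representative as justified after Lemma \ref{lem1}), substitute $y=f^{-1}(x)$ so that $x=f(y)$ and $y\to\infty$ as $x\to\infty$; then
\[
\frac{f^{-1}(x)-x}{x}=\frac{y-f(y)}{f(y)}=-\frac{(f(y)-y)/y}{f(y)/y},
\]
whose numerator tends to $0$ and whose denominator tends to $1$ because $f\in H$.

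For normality, let $[g]\in\mathrm{QI}(\mathbb{R}_{+})$ with quasi-isometry constants $K\ge 1$, $C\ge 0$, and let $[h]\in H$; I want $\lim_{x\to\infty}\frac{g(h(g^{-1}(x)))-x}{x}=0$. Substituting $y=g^{-1}(x)$ (so $x=g(y)$ and $y\to\infty$) reduces this to showing
\[
\frac{g(h(y))-g(y)}{g(y)}\longrightarrow 0.
\]
The upper quasi-isometry bound gives $|g(h(y))-g(y)|\le K|h(y)-y|+C$, and the lower bound together with $g(y)\to\infty$ gives $g(y)\ge\frac{1}{K}y-C$ for large $y$. Therefore
\[
\frac{|g(h(y))-g(y)|}{g(y)}\le\frac{K|h(y)-y|+C}{\frac{1}{K}y-C}=\frac{K^{2}\,\frac{|h(y)-y|}{y}+\frac{KC}{y}}{1-\frac{KC}{y}}\longrightarrow 0,
\]
since $|h(y)-y|/y\to 0$ because $h\in H$.

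The computations are all elementary once the framework is set up; I expect the only genuinely delicate point to be the normality step, where one must convert the additive smallness $|h(y)-y|=o(y)$ into multiplicative smallness of $|g(h(y))-g(y)|/g(y)$. The upper and lower linear bounds from the quasi-isometry inequality for $g$ are exactly what makes this work, and they must both be invoked. Verifying well-definedness on equivalence classes and handling the inverse via a homeomorphic representative are the other spots where one must be careful, but both follow from the remarks immediately after Lemma \ref{lem1}.
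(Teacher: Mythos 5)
Your verification that $H$ is well defined on equivalence classes, closed under composition and inverses, and normal is correct, and it follows essentially the same route as the paper; the small variations (handling the inverse via the substitution $y=f^{-1}(x)$ rather than the quasi-isometry inequality for $f^{-1}$, conjugating by $g$ rather than $g^{-1}$, and using $g(x)/x\to 1$ where the paper only needs that $g(x)/x$ is bounded) are all harmless and, if anything, slightly cleaner.

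There is, however, one missing step: you never show that $H$ is \emph{nontrivial}. What you establish is that $H$ is a normal subgroup with $H\neq \mathrm{QI}(\mathbb{R}_{+})$, witnessed by $f(x)=2x$. For the conclusion that $\mathrm{QI}(\mathbb{R}_{+})$ is not simple you also need $H\neq\{1\}$, i.e.\ a quasi-isometry $g$ with $\sup_{x>0}|g(x)-x|=\infty$ (so that $[g]\neq[\mathrm{id}]$) but $\frac{g(x)-x}{x}\to 0$ (so that $[g]\in H$). The paper exhibits such a witness explicitly, $g(x)=x+\ln(x+1)$, and one should also note why it is a quasi-isometry (its derivative is bounded above and below for large $x$). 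This is an easy fix, but as written your argument does not rule out $H=\{1\}$, and a trivial proper normal subgroup says nothing about simplicity.
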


\begin{proof}
For any $[f],[g]\in H,$ we have%
\[
\frac{f(g(x))-x}{x}=\frac{f(g(x))-g(x)}{g(x)}\frac{g(x)}{x}+\frac{g(x)-x}{x}%
. 
\]%
Since $g$ is a quasi-isometry, we know that $\frac{1}{K}x-C\leq
g(x)-g(0)\leq Kx+C.$ Therefore, $\frac{1}{K}-1\leq \frac{g(x)}{x}\leq K+1$
for sufficiently large $x.$ When $x\rightarrow \infty ,$ we have $%
g(x)\rightarrow \infty .$ This means $\frac{f(g(x))-g(x)}{g(x)}\rightarrow
0. $ Therefore, $\frac{f(g(x))-x}{x}\rightarrow 0$ as $x\rightarrow \infty .$
This proves that $[fg]\in H.$

Note that 
\[
\frac{|f^{-1}(x)-x|}{x}=\frac{|f^{-1}(x)-f^{-1}(f(x))|}{x}\leq \frac{%
K|x-f(x)|+C}{x}. 
\]%
Therefore, 
\[
\lim_{x\rightarrow \infty }\frac{|f^{-1}(x)-x|}{x}=0. 
\]%
This means $[f^{-1}]\in H$ and that $H$ is a subgroup.

For any quasi-isometric homeomorphism $g\in \mathrm{Homeo}(\mathbb{R}_{+})$
and any $[f]\in H,$ we have%
\begin{eqnarray*}
\frac{g^{-1}(f(g(x)))-x}{x} &=&\frac{g^{-1}(f(g(x)))-g^{-1}(g(x))}{x} \\
&=&\frac{g^{-1}(f(g(x)))-g^{-1}(g(x))}{g(x)}\frac{g(x)}{x}.
\end{eqnarray*}%
Note that when $x\rightarrow \infty ,$ the function $\frac{g(x)}{x}$ is
bounded. Let $y=g(x).$ We have%
\[
\frac{|g^{-1}(f(y))-g^{-1}(y)|}{y}\leq \frac{K|f(y)-y|+C}{y}\rightarrow
0,x\rightarrow \infty . 
\]%
Therefore, $[g^{-1}fg]\in H.$

It's obvious that the function $f$ defined by $f(x)=2x$ is not an element in 
$H.$ The function defined by $g(x)=x+\ln (x+1)$ gives a non-trivial element
in $H.$ This means that $H$ is a proper normal subgroup of $\mathrm{QI}(%
\mathbb{R}_{+}).$
\end{proof}

\bigskip

\begin{lemma}
\label{wcor}Let 
\[
W(\mathbb{R})=\{f\in \mathrm{Diff}(\mathbb{R})\mid \sup_{x\in \mathbb{R}%
}|f(x)-x|<+\infty ,\sup_{x\in R}|f^{\prime }(x)|<\infty \} 
\]%
be the group consisting of diffeomorphisms with bounded derivatives and of
bounded distance from the identity. Define a homeomorphism $h:\mathbb{R}%
\rightarrow \mathbb{R}$ by $h(x)=e^{x}$ when $x\geq 1,$ $h(x)=-h(-x)$ when $%
x\leq -1,$ and $h(x)=ex$ when $-1\leq x\leq 1.$ Then $hfh^{-1}\ $is a
quasi-isometry for any $f\in W(\mathbb{R})$.
\end{lemma}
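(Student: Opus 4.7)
My plan is to verify the quasi-isometry condition for $g := hfh^{-1}$ via a derivative estimate on the two exterior rays $|y|\geq e$ (where $h$ is literally the exponential or its odd reflection), and to absorb the compact central interval $[-e,e]$ into an additive constant. The guiding observation is that conjugation by the exponential converts the additive bounded displacement of $f$ into multiplicative bounded distortion for $g$, and that the chain rule then expresses $g'$ as a product of two factors, each of which is easy to pinch between positive constants.

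I will work on $y\geq e$; the case $y\leq -e$ is identical by the odd symmetry of $h$, and the interior $[-e,e]$ is treated separately. There $h^{-1}(y)=\ln y$ and $h'(x)=e^{x}$, so the chain rule gives
$$g'(y)\;=\;h'\!\bigl(f(\ln y)\bigr)\,f'(\ln y)\,(h^{-1})'(y)\;=\;e^{\,f(\ln y)-\ln y}\,f'(\ln y).$$
With $D := \sup|f(x)-x|<\infty$ the first factor is trapped in $[e^{-D},e^{D}]$, and with $M := \sup|f'|<\infty$ the second factor is trapped above by $M$. For a matching positive lower bound $m>0$ on $f'$, I will use that $W(\mathbb{R})$ is effectively a group of bi-Lipschitz diffeomorphisms (equivalently, $(f^{-1})'$ is also bounded), so $\inf f'>0$. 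This yields $g'(y)\in[m e^{-D},\,M e^{D}]$ on $[e,\infty)$.

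From this derivative pinch, the fundamental theorem of calculus yields
$$m e^{-D}\,|y_{1}-y_{2}|\;\leq\;|g(y_{1})-g(y_{2})|\;\leq\;M e^{D}\,|y_{1}-y_{2}|$$
for $y_{1},y_{2}\geq e$, and the same estimate holds on $(-\infty,-e]$. On $[-e,e]$, $g$ is a homeomorphism whose image lies in the bounded set $h([-1-D,1+D])$, so $|g(y_{1})-g(y_{2})|$ is uniformly bounded there. Splitting an arbitrary pair $y_{1},y_{2}\in\mathbb{R}$ across the two exterior pieces (bi-Lipschitz) and an interior piece of length at most $2e$ (with bounded image) then produces the quasi-isometry inequalities
$$\tfrac{1}{K}|y_{1}-y_{2}|-C\;\leq\;|g(y_{1})-g(y_{2})|\;\leq\;K|y_{1}-y_{2}|+C$$
for uniform constants $K\geq 1$, $C\geq 0$.

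The one delicate point I foresee is this positive lower bound on $f'$: read literally, the defining data of $W(\mathbb{R})$ supply only an upper bound on $|f'|$, and one can cook up an orientation-preserving diffeomorphism $f$ with $|f(x)-x|\leq D$, $|f'|\leq 1$, but $\inf f'=0$ on a sparse sequence of intervals, which makes $g'$ arbitrarily small on regions whose length grows like $e^{n^{2}}$ and so destroys the lower Lipschitz inequality. The substance of the lemma is therefore that the lower bound on $f'$ is to be read into the working definition of $W(\mathbb{R})$ (consistently with $W$ being a group); once that is acknowledged, the derivative estimate sketched above is routine.
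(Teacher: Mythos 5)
Your proof is correct and follows essentially the same route as the paper: compute $(hfh^{-1})'(y)=e^{f(\ln y)-\ln y}f'(\ln y)$ by the chain rule on the exterior rays, bound it using $\sup|f-\mathrm{id}|$ and $\sup|f'|$, and absorb the central interval into the additive constant. The one place you go beyond the paper is worth keeping: the paper records only the upper bound $|(hfh^{-1})'|\le e^{D}M$ and then declares the map a quasi-isometry, whereas the lower quasi-isometry inequality genuinely requires $\inf f'>0$; your compression-on-sparse-intervals example shows the statement fails for the literal definition of $W(\mathbb{R})$, which moreover is not closed under inversion as written. Since the elements of $W(\mathbb{R})$ actually used later (the periodic-derivative lifts in $\mathrm{Diff}_{\mathbb{Z}}(\mathbb{R})$ and the maps $b_{i,s}$ on the relevant ray) all satisfy $\inf f'>0$, your reading of the definition is the intended one, and your argument is complete where the paper's is elliptical.
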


\begin{proof}
For any $f\in W(\mathbb{R})$ and sufficiently large $x>0,$ its derivative
satisfies that 
\begin{eqnarray*}
|hfh^{-1}(x)^{\prime }| &=&|(e^{f(\ln x)})^{\prime }|=|(xe^{f(\ln x)-\ln
x})^{\prime }| \\
&=&|e^{f(\ln x)-\ln x}(1+f^{\prime }(\ln x)-1)|=|e^{f(\ln x)-\ln x}f^{\prime
}(\ln x)| \\
&\leq &e^{\sup_{x\in \mathbb{R}}|f(x)-x|}\cdot \sup_{x\in \mathbb{R}%
}|f^{\prime }(x)|.
\end{eqnarray*}%
The case for negative $x<0$ can be calculated similarly. This proves that $%
hfh^{-1}\ $is a quasi-isometry.
\end{proof}

\bigskip

The following result was proved by Sankaran \cite{sar}.

\begin{corollary}
\label{corsar}The quasi-isometry group $\mathrm{QI}(\mathbb{R})$ contains $%
\mathrm{Diff}_{\mathbb{Z}}(\mathbb{R})$ (lifting of $\mathrm{Diff}(S^{1})$
to $\mathrm{Homeo}(\mathbb{R})$).
\end{corollary}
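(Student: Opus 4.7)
The plan is to embed $\mathrm{Diff}_{\mathbb{Z}}(\mathbb{R})$ into $\mathrm{QI}(\mathbb{R})$ by conjugating with the homeomorphism $h$ from Lemma \ref{wcor}. Explicitly, I will define $\Phi : \mathrm{Diff}_{\mathbb{Z}}(\mathbb{R}) \to \mathrm{QI}(\mathbb{R})$ by $\Phi(f) = [hfh^{-1}]$ and verify that it is an injective group homomorphism.

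The first step, checking $\mathrm{Diff}_{\mathbb{Z}}(\mathbb{R}) \subseteq W(\mathbb{R})$, is immediate from the defining relation $f(x+1) = f(x) + 1$ satisfied by any lift $f$ of a circle diffeomorphism. This relation forces both $f(x)-x$ and $f'(x)$ to be continuous $1$-periodic functions on $\mathbb{R}$, and continuous periodic functions are uniformly bounded. Lemma \ref{wcor} then guarantees that $hfh^{-1}$ is a quasi-isometry, so $\Phi$ takes values in $\mathrm{QI}(\mathbb{R})$; and $\Phi$ is a homomorphism because conjugation in $\mathrm{Homeo}(\mathbb{R})$ is, followed by the quotient to $\mathrm{QI}(\mathbb{R})$.

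The main work is injectivity, and this is the only delicate step I expect. Suppose $\Phi(f) = \Phi(g)$, equivalently $\sup_{x}|hfh^{-1}(x) - hgh^{-1}(x)| \le M < \infty$. Substituting $x = h(y)$ for large $y > 0$ (where $h$ is exponential) converts this into $|e^{f(y)} - e^{g(y)}| \le M$. Dividing through by $e^{\min(f(y),g(y))}$ and using that $f(y), g(y) \to +\infty$ forces $|f(y) - g(y)| \to 0$ as $y \to +\infty$; a symmetric estimate handles $y \to -\infty$. The key final observation is that because $f$ and $g$ both commute with integer translation, $f - g$ is $1$-periodic, and a periodic function that tends to zero at infinity is identically zero. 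Hence $f = g$, completing the embedding. The substance of the proof lies in this exponential-amplification argument — the map $h$ is designed precisely so that a bounded perturbation downstairs becomes exponentially small upstairs — while everything else is formal bookkeeping.
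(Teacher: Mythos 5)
Your proposal is correct and follows essentially the same route as the paper: show $\mathrm{Diff}_{\mathbb{Z}}(\mathbb{R})\subseteq W(\mathbb{R})$ via periodicity of $f(x)-x$ and $f'$, apply Lemma \ref{wcor}, and then use exponential amplification together with $1$-periodicity to get injectivity. The paper phrases the last step contrapositively (if $f(x)>x$ somewhere, then $|e^{f(x+n)}-e^{x+n}|=|e^{f(x)}-e^{x}|e^{n}\to\infty$ along $y_n=e^{x+n}$), which is your argument with $g=\mathrm{id}$; the remark about $y\to-\infty$ is superfluous but harmless.
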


\begin{proof}
For any $f\in \mathrm{Diff}_{\mathbb{Z}}(\mathbb{R}),$ we have $%
f(x+1)=f(x)+1 $ for any $x\in \mathbb{R}.$ This means $\sup_{x\in \mathbb{R}%
}|f(x)-x|<+\infty .$ Since $f(x)-x$ is periodic, we know that $f^{\prime
}(x) $ is bounded. Suppose that $f(x)>x$ for some $x\in \lbrack 0,1].$ Take $%
y_{n}=e^{x+n},n>0.$ Let $h$ be the function defined in Lemma \ref{wcor}. We
have%
\[
|hfh^{-1}(y_{n})-y_{n}|=|e^{f(x+n)}-e^{x+n}|=|e^{f(x)}-e^{x}|e^{n}%
\rightarrow \infty , 
\]%
which means $[hfh^{-1}]\neq \lbrack id]\in \mathrm{QI}(\mathbb{R}).$
\end{proof}

\begin{lemma}
The quasi-isometry group $\mathrm{QI}(\mathbb{R})$ contains the semi-direct
product $\mathrm{Diff}_{\mathbb{Z}}(\mathbb{R})\ltimes H.$
\end{lemma}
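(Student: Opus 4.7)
The plan is to realize both factors as subgroups of $\mathrm{QI}(\mathbb{R})$ and then verify normality and trivial intersection. Corollary \ref{corsar} embeds $\mathrm{Diff}_{\mathbb{Z}}(\mathbb{R})$ into $\mathrm{QI}(\mathbb{R})$ via $\iota:f\mapsto [\tilde{f}]$ with $\tilde{f}=hfh^{-1}$, and Lemma \ref{lem1} identifies $H\subseteq\mathrm{QI}(\mathbb{R}_{+})$ with a subgroup of $\mathrm{QI}^{+}(\mathbb{R})=\mathrm{QI}(\mathbb{R}_{+})\times\mathrm{QI}(\mathbb{R}_{-})\subseteq\mathrm{QI}(\mathbb{R})$, extending each element of $H$ by the identity on the negative half-line. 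What remains is to check that $\iota(\mathrm{Diff}_{\mathbb{Z}}(\mathbb{R}))$ normalizes $H$ and that the two subgroups meet trivially.

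For normality, the first thing I would observe is that $f$ is monotone increasing (since it commutes with integer translations and is a diffeomorphism of $\mathbb{R}$), and that $h$ sends $[1,\infty)$ and $(-\infty,-1]$ to rays of the same sign, so $\tilde{f}=hfh^{-1}$ preserves each half-line outside a bounded neighborhood of $0$. Consequently $[\tilde{f}]\in\mathrm{QI}^{+}(\mathbb{R})$ decomposes as $([\tilde{f}_{+}],[\tilde{f}_{-}])\in\mathrm{QI}(\mathbb{R}_{+})\times\mathrm{QI}(\mathbb{R}_{-})$, and conjugating $(g,\mathrm{id})\in H\subseteq\mathrm{QI}(\mathbb{R}_{+})\times\mathrm{QI}(\mathbb{R}_{-})$ by this pair produces $(\tilde{f}_{+}g\tilde{f}_{+}^{-1},\mathrm{id})$. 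Thus the normality question collapses to the already-established normality of $H$ inside $\mathrm{QI}(\mathbb{R}_{+})$ proved in the preceding theorem, applied to the quasi-isometric homeomorphism representative $\tilde{f}_{+}$.

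For trivial intersection, suppose $[\tilde{f}]$ lies in the image of $H$ for some $f\in\mathrm{Diff}_{\mathbb{Z}}(\mathbb{R})$. Writing $f(x)=x+\varphi(x)$ with $\varphi$ continuous and $1$-periodic, one computes $\tilde{f}(x)=xe^{\varphi(\ln x)}$ for $x\geq e$, so the defining condition of $H$ forces $\varphi(\ln x)\to 0$ as $x\to\infty$, and periodicity of $\varphi$ gives $\varphi\equiv 0$, i.e., $f=\mathrm{id}$. The step I expect to require the most care is the decomposition: one must justify that $\tilde{f}$ genuinely respects the splitting of Lemma \ref{lem1} up to bounded distance, so that the conjugation action on $H$ really does reduce to the normality already proved on the half-line, rather than requiring a fresh quasi-isometric calculation on all of $\mathbb{R}$.
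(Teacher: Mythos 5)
Your proof is correct and takes essentially the same route as the paper: reduce the statement to showing $\iota(\mathrm{Diff}_{\mathbb{Z}}(\mathbb{R}))\cap H=\{e\}$ via the normality of $H$, then compute $hfh^{-1}(x)=xe^{\varphi(\ln x)}$ and use the periodicity of $\varphi(x)=f(x)-x$ together with the defining condition of $H$ to force $f=\mathrm{id}$. Your explicit verification that conjugation by $hfh^{-1}$ respects the splitting of Lemma \ref{lem1} and hence reduces to the normality of $H$ in $\mathrm{QI}(\mathbb{R}_{+})$ is a detail the paper leaves implicit, but the substance is the same.
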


\begin{proof}
Since $H\ $is normal, it's enough to prove that $\mathrm{Diff}_{\mathbb{Z}}(%
\mathbb{R})\cap H=\{e\},$ the trivial subgroup. Actually, for any $f\in 
\mathrm{Diff}_{\mathbb{Z}}(\mathbb{R}),$ the conjugate $hfh^{-1}$ is a
quasi-isometry as in the proof of Corollary \ref{corsar}. If $hfh^{-1}\in H,$
we have that 
\[
\lim_{x\rightarrow \infty }\frac{hfh^{-1}(x)}{x}=\lim_{x\rightarrow \infty }%
\frac{xe^{f(\ln x)-\ln x}}{x}=\lim_{x\rightarrow \infty }e^{f(\ln x)-\ln
x}=1. 
\]%
Since $f(x)-x$ is periodic, we know that $f(\ln x)=\ln x$ for any
sufficiently large $x.$ But this means that $f(y)=y$ for any $y$ and $f$ is
the identity.
\end{proof}

\subsection{Affine subgroups of $\mathrm{QI}(\mathbb{R})$}

\begin{lemma}
\label{lema1}The quasi-isometry group $\mathrm{QI}(\mathbb{R}_{+})$
(actually the semi-direct product $\mathrm{Diff}_{\mathbb{Z}}(\mathbb{R}%
)\ltimes H$) contains the semi-direct product $\mathbb{R}_{>0}\ltimes
(\bigoplus_{i\in \mathbb{R}_{\geq 1}}\mathbb{R)},$ generated by $%
A_{t},B_{i,s},$ $t\in \mathbb{R}_{>0},i\in \mathbb{R}_{\geq 1}=[1,\infty
),s\in \mathbb{R}$ satisfying%
\begin{eqnarray*}
A_{t}B_{i,s}A_{t}^{-1} &=&B_{i,st^{\frac{i}{i+1}}}, \\
B_{i,s_{1}}B_{i,s_{2}} &=&B_{i,s_{1}+s_{2}}, \\
B_{i,s_{1}}B_{j,s_{2}} &=&B_{j,s_{2}}B_{i,s_{1}} \\
A_{t_{1}}A_{t_{2}} &=&A_{t_{1}t_{2}}
\end{eqnarray*}%
for any $t_{1},t_{2}\in \mathbb{R}_{>0},i,j\in \mathbb{R}_{\geq
1},s_{1},s_{2}\in \mathbb{R}$.
\end{lemma}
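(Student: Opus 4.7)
The plan is to realise the generators explicitly by the homeomorphisms $A_{t}(x) = tx$ and $B_{i,s}(x) = x + s\,x^{1/(i+1)}$ on $\mathbb{R}_{+}$ (adjusted by a bounded modification near $0$ if $s < 0$), and to check the four relations modulo bounded distance. First I would confirm that each candidate is a quasi-isometry: $A_{t}$ is bi-Lipschitz and $B_{i,s}$ has derivative $1 + \frac{s}{i+1}\,x^{-i/(i+1)}$, which tends to $1$ as $x \to \infty$. Next I would locate them inside $\mathrm{Diff}_{\mathbb{Z}}(\mathbb{R}) \ltimes H$: under the embedding of Corollary~\ref{corsar}, the integer-periodic translation $T_{\ln t}(y) = y + \ln t$ satisfies $h T_{\ln t} h^{-1}(x) = tx$ for $x \geq e$, so $[A_{t}]$ is the image of $[T_{\ln t}] \in \mathrm{Diff}_{\mathbb{Z}}(\mathbb{R})$, while $(B_{i,s}(x) - x)/x = s\,x^{-i/(i+1)} \to 0$ places $[B_{i,s}]$ in $H$.

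The relations $A_{t_{1}}A_{t_{2}} = A_{t_{1}t_{2}}$ and $A_{t}B_{i,s}A_{t}^{-1} = B_{i,\,s t^{i/(i+1)}}$ hold on the nose, as a one-line scaling computation shows. The two composition relations among the $B$'s are the substantive point: by expanding
\[
\bigl(x + a\,x^{1/(j+1)}\bigr)^{1/(i+1)} = x^{1/(i+1)} + \tfrac{a}{i+1}\,x^{1/(i+1) - j/(j+1)} + O\bigl(x^{1/(i+1) - 2j/(j+1)}\bigr),
\]
and using that $i, j \geq 1$ implies $1/(i+1) \leq j/(j+1)$, all secondary exponents are non-positive and the corrections stay bounded in $x$. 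Applied to $B_{i,s_{1}} \circ B_{j,s_{2}}$ this yields $x + s_{1}\,x^{1/(i+1)} + s_{2}\,x^{1/(j+1)}$ up to $O(1)$, which is symmetric in the two generators and so gives commutativity in $\mathrm{QI}$; in the special case $j = i$ it gives $B_{i,\,s_{1}+s_{2}}$ up to $O(1)$, i.e.\ additivity.

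Finally I would verify that the resulting homomorphism from $\mathbb{R}_{>0} \ltimes \bigoplus_{i \geq 1} \mathbb{R}$ into $\mathrm{QI}(\mathbb{R}_{+})$ is injective: a general element maps to $tx + t\sum_{k} s_{k}\,x^{1/(i_{k}+1)} + O(1)$ with the $i_{k}$'s distinct and increasing, and being bounded-distance from the identity forces the linear coefficient $t - 1$ to vanish and then, inductively in decreasing order of $1/(i_{k}+1)$, each $s_{k}$ to vanish as well. The hard part will be the bookkeeping for the bounded remainders in the iterated binomial expansions above: one must verify that \emph{every} secondary term produced when substituting one $B$ into another remains bounded as $x \to \infty$, which depends essentially on the hypothesis $i \geq 1$ keeping all the relevant exponents non-positive.
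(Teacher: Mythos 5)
Your proposal is correct and follows essentially the same route as the paper: the same explicit models $A_{t}(x)=tx$, $B_{i,s}(x)=x+sx^{1/(i+1)}$, the same binomial/mean-value estimate (using $i,j\geq 1$ to keep the correction exponents non-positive) to verify the $B$-relations up to bounded distance, and the same exponent-comparison argument for injectivity. The only cosmetic difference is that you place $[B_{i,s}]$ in $H$ directly from $(B_{i,s}(x)-x)/x\to 0$, whereas the paper exhibits logarithmic conjugates $b_{i,s}\in W(\mathbb{R})$ with $hb_{i,s}h^{-1}=B_{i,s}$; both are fine.
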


\begin{proof}
Let 
\begin{eqnarray*}
A_{t}(x) &=&tx,t\in \mathbb{R}_{>0}, \\
B_{i,s}(x) &=&x+sx^{\frac{1}{i+1}},s\in \mathbb{R}
\end{eqnarray*}%
for $x\geq 0.$ We define $A_{t}(x)=B_{i,s}(x)=x$ for $x\leq 0.$ Since the
derivatives 
\[
A_{t}^{\prime }(x)=t,B_{i,s}^{\prime }(x)=1+\frac{s}{i+1}x^{\frac{-i}{i+1}} 
\]%
are bounded for sufficiently large $x,$ we know that $A_{t},B_{i,s}$ are
quasi-isometries. For any $x\geq 1,$ we have 
\begin{eqnarray*}
A_{t}B_{i,s}A_{t}^{-1}(x) &=&A_{t}B_{i,s}(x/t)=A_{t}(x/t+s(x/t)^{\frac{1}{i+1%
}}) \\
&=&x+st^{\frac{i}{i+1}}x^{\frac{1}{i+1}}=B_{i,st^{\frac{i}{i+1}}}(x).
\end{eqnarray*}

For any $x\geq 1,$ we have $B_{i,s_{1}}B_{i,s_{2}}(x)=B_{i,s_{1}}(x+s_{2}x^{%
\frac{1}{i+1}})=x+s_{2}x^{\frac{1}{i+1}}+s_{1}(x+s_{2}x^{\frac{1}{i+1}})^{%
\frac{1}{i+1}}$ and 
\begin{eqnarray*}
|B_{i,s_{1}}B_{s_{2}}(x)-B_{i,s_{1}+s_{2}}(x)| &=&|s_{1}((x+s_{2}x^{\frac{1}{%
i+1}})^{\frac{1}{i+1}}-x^{\frac{1}{i+1}})| \\
&\leq &|s_{1}\frac{s_{2}x^{\frac{1}{i+1}}}{x^{\frac{i}{i+1}}}|\leq
|s_{1}s_{2}|,
\end{eqnarray*}%
by Newton's Binomial Theorem. This means that $B_{i,s_{1}}B_{i,s_{2}}$ and $%
B_{i,s_{1}+s_{2}}$ are of bounded distance. It is obvious that $%
A_{t_{1}}A_{t_{2}}=A_{t_{1}t_{2}}.$

When $i<j$ are distinct natural numbers, 
\begin{eqnarray*}
&&|B_{i,s_{1}}B_{j,s_{2}}(x)-B_{j,s_{2}}B_{i,s_{1}}(x)| \\
&=&|x+s_{2}x^{\frac{1}{j+1}}+s_{1}(x+s_{2}x^{\frac{1}{j+1}})^{\frac{1}{i+1}%
}-(x+s_{1}x^{\frac{1}{i+1}}+s_{2}(x+s_{1}x^{\frac{1}{i+1}})^{\frac{1}{j+1}})|
\\
&=&|s_{1}((x+s_{2}x^{\frac{1}{j+1}})^{\frac{1}{i+1}}-x^{\frac{1}{i+1}%
})+s_{2}(x^{\frac{1}{j+1}}-(x+s_{1}x^{\frac{1}{i+1}})^{\frac{1}{j+1}})| \\
&\leq &|s_{1}\frac{s_{2}x^{\frac{1}{j+1}}}{x^{\frac{i}{i+1}}}|+|s_{2}\frac{%
s_{1}x^{\frac{1}{i+1}}}{x^{\frac{j}{j+1}}}|\leq 2|s_{1}s_{2}|
\end{eqnarray*}%
for any $x\geq 1.$ This proves that images $[A_{t}],[B_{i,s}]\in \mathrm{QI}(%
\mathbb{R}_{\geq 0})$ satisfy the relations. By abuse of notations, we still
denote the classes by the same letters.

We prove that the subgroup generated by $\{B_{i,s},$ $i\in \mathbb{R}_{\geq
1},s\in \mathbb{R\}}$ is the infinite direct sum $\bigoplus_{i\in \mathbb{R}%
_{\geq 1}}\mathbb{R}$. It's enough to prove that $%
B_{i_{1},s_{1}},B_{i_{2},s_{2}},\cdots ,B_{i_{k},s_{k}}$ are $\mathbb{Z}$%
-linearly independent for distinct $i_{1},i_{2},\cdots ,i_{k}$ and nonzero $%
s_{1},s_{2},...,s_{k}\in \mathbb{R}.$ This can directly checked. For
integers $n_{1},n_{2},...,n_{k},$ suppose that the composite $%
B_{i_{1},s_{1}}^{n_{1}}\circ B_{i_{2},s_{2}}^{n_{2}}\circ \cdots \circ
B_{i_{k},s_{k}}^{n_{k}}=\mathrm{id}\in \mathrm{QI}(\mathbb{R}_{\geq 0}).$ We
have 
\begin{eqnarray*}
&&\sup_{x\in \mathbb{R}_{>0}}|B_{i_{1},s_{1}}^{n_{1}}\circ
B_{i_{2},s_{2}}^{n_{2}}\circ \cdots \circ B_{i_{k},s_{k}}^{n_{k}}(x)-x| \\
&=&\sup_{x\in \mathbb{R}_{>0}}|n_{k}s_{k}x^{\frac{1}{i_{k}+1}%
}+n_{k-1}s_{k-1}(x+n_{k}s_{k}x^{\frac{1}{i_{k}+1}})^{\frac{1}{i_{k-1}+1}%
}+\cdots +n_{1}s_{1}(x+\cdots )^{\frac{1}{i_{1}+1}}|<+\infty ,
\end{eqnarray*}%
which implies $n_{1}=n_{2}=\cdots =n_{k}=0$ considering the exponents.

The subgroup $\mathbb{R}_{>0}\ltimes (\bigoplus_{i\in \mathbb{R}_{\geq 1}}%
\mathbb{R)}$ lies in $\mathrm{Diff}_{\mathbb{Z}}(\mathbb{R})\ltimes H$ by
the following construction. Let $a_{t},b_{i,s}:\mathbb{R}\rightarrow \mathbb{%
R}\ $be defined by $a_{t}(x)=x+\ln t,b_{i,s}(x)=\ln (e^{x}+se^{\frac{x}{i+1}%
})$ for $t\in \mathbb{R}_{>0},i\in \mathbb{R}_{\geq 1},s\in \mathbb{R}$. It
can be directly checked that $a_{t}\in $ $\mathrm{Diff}_{\mathbb{Z}}(\mathbb{%
R})$ and $b_{i,s}\in W(\mathbb{R})$ (defined in Lemma \ref{wcor}). Let $%
h(x)=e^{x}.$ A direct calculation shows that $%
ha_{t}h^{-1}=A_{t},hb_{i,s}h^{-1}=B_{i,s},$ as elements in $\mathrm{QI}(%
\mathbb{R}_{+}).$
\end{proof}

\section{Left-orderability}

The following is well-known (for a proof, see \cite{nav}, Prop. 1.4.)

\begin{lemma}
\label{3.1}A group $G$ is left-orderable if and only if, for every finite
collection of nontrivial elements $g_{1},...,g_{k}$, there exist choices $%
\varepsilon _{i}\in \{1,-1\}$ such that the identity is not an element of
the semigroup generated by $\{g_{i}^{\varepsilon _{i}},i=1,2,...,k\}.$
\end{lemma}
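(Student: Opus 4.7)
The plan is to prove both directions, the forward one being immediate and the converse requiring a compactness argument.

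For the forward direction, suppose $G$ carries a left-invariant total order with positive cone $P = \{g : g > e\}$. Left-invariance gives $P\cdot P \subseteq P$, since $a,b > e$ implies $ab > a \cdot e = a > e$. Given nontrivial elements $g_{1},\ldots,g_{k}$, I would set $\varepsilon_{i} = +1$ when $g_{i}\in P$ and $\varepsilon_{i} = -1$ otherwise, so every $g_{i}^{\varepsilon_{i}}$ lies in $P$; the semigroup they generate is then contained in $P$, which does not contain $e$.

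The converse is the substantial direction. My plan is a standard Tychonoff compactness argument on the product space $X = \{+1,-1\}^{G\setminus\{e\}}$. For each finite subset $F \subset G\setminus\{e\}$, let $C_{F} \subset X$ denote the set of sign assignments $\sigma$ such that $e$ is not in the sub-semigroup generated by $\{g^{\sigma(g)} : g\in F\}$. Membership in $C_{F}$ depends only on the restriction $\sigma|_{F}$, so $C_{F}$ is clopen, and it is nonempty by the hypothesis of the lemma. Since $C_{F_{1}\cup F_{2}} \subseteq C_{F_{1}}\cap C_{F_{2}}$, the family has the finite intersection property, and compactness of $X$ produces a global $\sigma \in \bigcap_{F} C_{F}$.

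I would then set $P := \sigma^{-1}(+1)$ and verify that it is the positive cone of a left order. Testing the two-element set $\{g,g^{-1}\}$ forces $\sigma(g^{-1}) = -\sigma(g)$, because otherwise the semigroup on $\{g,g^{-1}\}$ contains $gg^{-1}=e$; this yields the partition $G\setminus\{e\} = P \sqcup P^{-1}$. The critical step is the closure $P\cdot P \subseteq P$: if $a,b\in P$ with $ab \notin P \cup \{e\}$, then $(ab)^{-1}\in P$, and the product $a\cdot b\cdot (ab)^{-1} = e$ lies in the sub-semigroup generated by $\{a,b,(ab)^{-1}\}$, contradicting $\sigma\in C_{\{a,b,ab\}}$. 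Finally, declaring $g<h$ iff $g^{-1}h \in P$ gives a left-invariant total order, since $(fg)^{-1}(fh) = g^{-1}h$. The only subtlety I foresee is correctly packaging the sign-consistency on pairs $\{g,g^{-1}\}$; once that is in place, the compactness step and the verification of the positive-cone axioms are routine.
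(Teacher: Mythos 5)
Your proof is correct. The paper does not actually prove this lemma itself---it only cites Navas (Prop.\ 1.4)---and your Tychonoff compactness argument on $\{\pm 1\}^{G\setminus\{e\}}$, with the clopen sets $C_F$ having the finite intersection property and the resulting $\sigma$ yielding a positive cone, is precisely the standard proof from that reference, so there is nothing substantive to compare.
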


The proof of Theorem \ref{th1} follows a similar strategy used by Navas to
prove the left-orderability of the group $\mathcal{G}_{\infty }$ of germs at 
$\infty $ of homeomorphisms of $\mathbb{R}$ (cf. \cite{dnr}, Remark 1.1.13, 
\cite{mann}, Prop. 2.2).

\begin{proof}[Proof of Theorem \protect\ref{th1}]
It's enough to prove that $\mathrm{QI}(\mathbb{R}_{+})$ is left-orderable.
Let $f_{1},f_{2},...,f_{n}\in \mathrm{QI}(\mathbb{R}_{+})$ be any finitely
many non-trivial elements. Note that any $1\neq \lbrack f]\in \mathrm{QI}(%
\mathbb{R}_{+})$ has $\sup_{x>0}|f(x)-x|=\infty .$ This property doesn't
depend on the choice of $f\in \lbrack f].$ Without confusions, we still
denote $[f]$ by $f.$ Choose a sequence $\{x_{1,k}\}\subset \mathbb{R}_{+}$
such that $\sup_{k\in \mathbb{N}}|f_{1}(x_{1,k})-x_{1,k}|=\infty .$ For each 
$i>1,$ we have either $\sup_{k\in \mathbb{N}}|f_{i}(x_{1,k})-x_{1,k}|=\infty 
$ or $\sup_{k\in \mathbb{N}}|f_{i}(x_{1,k})-x_{1,k}|\leq M$ for a real
number $M.$ After passing to subsequences, we assume for each $i=1,2,...,n$
that $f_{i}(x_{1,k})-x_{1,k}\rightarrow +\infty ,$ or $%
f_{i}(x_{1,k})-x_{1,k}\rightarrow -\infty $ or $\sup_{k\in \mathbb{N}%
}|f_{i}(x_{1,k})-x_{1,k}|\leq M.$ We assign $\varepsilon _{i}=1$ for the
first case, $\varepsilon _{i}=-1$ for the second case. For the third case,
let 
\[
S_{1}=\{f_{i}\mid \sup_{k\in \mathbb{N}}|f_{i}(x_{1,k})-x_{1,k}|\leq M\}. 
\]%
Note that $f_{1}\notin S_{1}.$ Choose $f_{i_{0}}\in S_{1}$ if $S_{1}$ is not
empty. We choose another sequence $\{x_{2,k}\}$ such that $\sup_{k\in 
\mathbb{N}}|f_{i_{0}}(x_{2,k})-x_{2,k}|=\infty .$ Similarly, after passing
to a subsequence, we have for each $f\in S_{1}$ that $f(x_{2,k})-x_{2,k}%
\rightarrow +\infty ,$ or $f(x_{2,k})-x_{2,k}\rightarrow -\infty $ or $%
\sup_{k\in \mathbb{N}}|f(x_{2,k})-x_{2,k}|\leq M^{\prime }$ for another real
number $M^{\prime }.$ Assign $\varepsilon _{i}=1$ for the first case and $%
\varepsilon _{i}=-1$ for the second case. Continue this process to define $%
S_{2},S_{3},...$ and choose sequences $\{x_{i,k}\},i=3,4,...,$ to assign $%
\varepsilon _{i}$ for each $f_{i}.$ Note that the process will stop at $n$
times, as the number of elements without assignment is strictly decreasing.

For an element $f\in \mathrm{QI}(\mathbb{R}_{+})$ satisfying $%
f(x_{i})-x_{i}\rightarrow \infty ,i\rightarrow \infty $ for some sequence $%
\{x_{i}\},$ we assume that $f(x_{i})-x_{i}>0$ for each $i.$ Since $f,f^{-1}$
are orientation-preserving, we have 
\begin{eqnarray*}
f^{-1}(x_{i})-x_{i}
&=&-(x_{i}-f^{-1}(x_{i}))=-(f^{-1}(f(x_{i}))-f^{-1}(x_{i})) \\
&\leq &-(\frac{1}{K}(f(x_{i})-x_{i})-C)\rightarrow -\infty .
\end{eqnarray*}

Let $w=f_{i_{1}}^{\varepsilon _{i_{1}}}\cdots f_{i_{m}}^{\varepsilon
_{i_{m}}}\in \langle f_{1},f_{2},...,f_{n}\rangle $ be a nontrivial word. If 
$\{i_{1},...,i_{m}\}\nsubseteq S_{1},$ we have $w(x_{1,k})-x_{1,k}%
\rightarrow \infty .$ Otherwise, $\sup_{k\in \mathbb{N}%
}|w(x_{1,k})-x_{1,k}|<\infty .$ Suppose that $\{i_{1},...,i_{m}\}\subset
S_{t},$ but $\{i_{1},...,i_{m}\}\nsubseteq S_{t+1}$ with the assumption that 
$S_{0}=\{f_{1},f_{2},...,f_{n}\}.$ We have $w(x_{t+1,k})-x_{t+1,k}%
\rightarrow \infty ,k\rightarrow \infty .$ This proves that $w\neq 1\in 
\mathrm{QI}(\mathbb{R}_{+}).$ Therefore, $\mathrm{QI}(\mathbb{R}_{+})$ is
left-orderable by Lemma \ref{3.1}.
\end{proof}

\begin{lemma}
The group $\mathrm{QI}(\mathbb{R}_{+})$ is not locally indicable.
\end{lemma}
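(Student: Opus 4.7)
The strategy is to exhibit a finitely generated nontrivial subgroup of $\mathrm{QI}(\mathbb{R}_{+})$ with finite abelianization, which then admits no nontrivial homomorphism to $\mathbb{Z}$. By Corollary \ref{corsar}, $\mathrm{Diff}_{\mathbb{Z}}(\mathbb{R})$ embeds into $\mathrm{QI}(\mathbb{R}_{+})$ (the test sequences $y_{n}=e^{x+n}$ used in its proof lie in $\mathbb{R}_{+}$, so the image is nontrivial in the positive-half factor of $\mathrm{QI}^{+}(\mathbb{R})=\mathrm{QI}(\mathbb{R}_{+})\times \mathrm{QI}(\mathbb{R}_{-})$). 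Identifying $\mathrm{Diff}_{\mathbb{Z}}(\mathbb{R})$ with the universal cover $\widetilde{\mathrm{Diff}_{+}(S^{1})}$, the M\"{o}bius inclusion $PSL(2,\mathbb{R})\hookrightarrow \mathrm{Diff}_{+}(S^{1})$ induces an isomorphism on $\pi_{1}\cong \mathbb{Z}$ (both are generated by the full-rotation loop), so it lifts to an injection $\widetilde{PSL(2,\mathbb{R})}\hookrightarrow \mathrm{Diff}_{\mathbb{Z}}(\mathbb{R})\hookrightarrow \mathrm{QI}(\mathbb{R}_{+})$.

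Inside $\widetilde{PSL(2,\mathbb{R})}$, I take $\Gamma^{*}$ to be the preimage of the Fuchsian $(2,3,7)$-triangle group $\Gamma=\langle x,y\mid x^{2}=y^{3}=(xy)^{7}=1\rangle$. Choosing lifts $a,b$ of $x,y$ with rotation numbers $1/2$ and $1/3$ on $\mathbb{R}$, and so that $ab$ has rotation number $1/7$, one arranges $a^{2}=b^{3}=(ab)^{7}=T_{1}$ (the central generator of $\widetilde{PSL(2,\mathbb{R})}$), giving the presentation $\Gamma^{*}=\langle a,b\mid a^{2}=b^{3}=(ab)^{7}\rangle$. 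Since $\widetilde{PSL(2,\mathbb{R})}$ is torsion-free and $\Gamma$ is infinite, $\Gamma^{*}$ is a nontrivial finitely generated torsion-free subgroup of $\mathrm{QI}(\mathbb{R}_{+})$.

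A direct computation of the abelianization of $\Gamma^{*}$: the relations become $2a-3b=0$ (from $a^{2}=b^{3}$) and $5a+7b=0$ (from $a^{2}=(ab)^{7}=a^{7}b^{7}$ in the abelianization). The associated $2\times 2$ integer matrix has determinant $29$, and its Smith normal form yields $(\Gamma^{*})^{\mathrm{ab}}\cong \mathbb{Z}/29\mathbb{Z}$. Being finite, this admits no nontrivial homomorphism to $\mathbb{Z}$, so $\mathrm{QI}(\mathbb{R}_{+})$ contains a nontrivial finitely generated non-indicable subgroup and is therefore not locally indicable.

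The main technical point of the plan is the rotation-number bookkeeping needed to justify the claim that the lifts $a,b$ can be chosen so that $a^{2}$, $b^{3}$, and $(ab)^{7}$ coincide with the same central element $T_{1}$ (as opposed to distinct powers of it); this is a standard Euler-class computation for the Seifert fibration over the orbifold $\mathbb{H}^{2}/\Gamma$. Once the presentation is secured, the rest of the argument is routine linear algebra.
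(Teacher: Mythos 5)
Your overall strategy is exactly the paper's: embed the lift $\tilde{\Gamma}$ of $\mathrm{PSL}(2,\mathbb{R})$ into $\mathrm{QI}(\mathbb{R}_{+})$ via Corollary \ref{corsar} and exhibit inside it the central extension of the $(2,3,7)$-triangle group as a finitely generated subgroup with no nontrivial homomorphism to $\mathbb{Z}$. The embedding step and the reduction to ``finite abelianization'' are fine. The gap is in the one step you explicitly defer, and when that Euler-class bookkeeping is actually carried out it comes out differently from what you assert. The preimage of the $(2,3,7)$ group in $\widetilde{\mathrm{PSL}(2,\mathbb{R})}$ is $\pi_{1}$ of the Brieskorn homology sphere $\Sigma(2,3,7)$ (the unit tangent bundle of the orbifold), hence is \emph{perfect}; its standard presentation is the one the paper uses, $\langle f,g,h\mid f^{2}=g^{3}=h^{7}=fgh\rangle$. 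In that group the central generator is $c=f^{2}=g^{3}=h^{7}=fgh$, and from $fgh=c=h^{7}$ one gets $fg=h^{6}$, so $(fg)^{7}=c^{6}$. Thus with $a,b$ lifts of the order-$2$ and order-$3$ generators normalized so that $a^{2}=b^{3}=T_{1}$ (and these normalizations are forced: the only lift of the order-$3$ element whose cube is a generator of the center is $g$ itself), one obtains $(ab)^{7}=T_{1}^{6}$, \emph{not} $T_{1}$. Your proposed relations $a^{2}=b^{3}=(ab)^{7}=T_{1}$ cannot be arranged; equivalently, the central extension they define has Euler number $29/42$ rather than the Fuchsian value $\pm 1/42$, which is why your presented group has abelianization $\mathbb{Z}/29$ while the actual subgroup is perfect.

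The good news is that the repair is immediate and the conclusion stands: with the correct relations $a^{2}=b^{3}$ and $(ab)^{7}=(a^{2})^{6}$ the abelianization matrix is $\bigl(\begin{smallmatrix}2&-3\\-5&7\end{smallmatrix}\bigr)$, of determinant $-1$, so the subgroup is perfect and in particular admits no nontrivial homomorphism to $\mathbb{Z}$ (or to $(\mathbb{R},+)$, which is all the paper uses). So the argument reaches a true conclusion through a false intermediate presentation; you should either verify the Seifert/Euler data honestly or simply quote the standard presentation $\langle f,g,h\mid f^{2}=g^{3}=h^{7}=fgh\rangle$ and its perfectness, as the paper does.
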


\begin{proof}
Note that $\mathrm{QI}(\mathbb{R}_{+})$ contains the lifting $\tilde{\Gamma}$
of $\mathrm{PSL}(2,\mathbb{R})<\mathrm{Diff}(S^{1})$ to $\mathrm{Homeo}(%
\mathbb{R})$ (cf. Corollary \ref{corsar}). But this lifting $\tilde{\Gamma}$
contains a subgroup $\Gamma =\langle f,g,h:f^{2}=g^{3}=h^{7}=fgh\rangle ,$
the lifting of the $(2,3,7)$-triangle group. There are no non-trivial maps
from $\Gamma $ to $(\mathbb{R},+)$ (for more details see \cite{dnr}, page
94).
\end{proof}

\section{The quasi-isometric group cannot act effectively on the line}

The following was proved by Mann \cite{mann} (Proposition 6).

\begin{lemma}
\label{lema2}Let the affine group $\mathbb{R}_{>0}\ltimes \mathbb{R},$
generated by $A_{t},B_{s},$ $t\in \mathbb{R}_{>0},s\in \mathbb{R}$ satisfying%
\begin{eqnarray*}
A_{t}B_{s}A_{t}^{-1} &=&B_{ts}, \\
B_{s_{1}}B_{s_{2}} &=&B_{s_{1}+s_{2}}, \\
A_{t_{1}}A_{t_{2}} &=&A_{t_{1}t_{2}}.
\end{eqnarray*}%
The affine group $\mathbb{R}_{>0}\ltimes \mathbb{R}$ cannot act effectively
on the real line $\mathbb{R}$ by homeomorphisms with $A_{t}$ a translation
for each $t$.
\end{lemma}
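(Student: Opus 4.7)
The plan is to argue by contradiction: I will assume an effective action $\phi : G = \mathbb{R}_{>0}\ltimes\mathbb{R} \to \mathrm{Homeo}(\mathbb{R})$ in which each $\phi(A_t)$ is a translation $T_{\tau(t)}$ and derive a contradiction. Writing $\beta(s) = \phi(B_s)$, the relation $A_t B_s A_t^{-1} = B_{ts}$ yields $\beta(ts)(x) = \beta(s)(x - \tau(t)) + \tau(t)$, which makes $\tau : \mathbb{R}_{>0} \to \mathbb{R}$ and $\beta : \mathbb{R} \to \mathrm{Homeo}_+(\mathbb{R})$ homomorphisms. First I would verify that $\tau$ is injective: if $\tau(t_0) = 0$ for some $t_0 \neq 1$, the conjugation formula forces $\beta(s(1 - t_0)) = \mathrm{id}$ for all $s$, hence $\beta$ is trivial --- contradicting effectiveness. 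Injectivity plus divisibility of $\mathbb{R}_{>0}$ then makes $\tau(\mathbb{R}_{>0})$ dense on both sides of $0$.

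The second, and most delicate, step is to prove every $\beta(s)$, $s \neq 0$, is fixed-point free. Set $F = \mathrm{Fix}(\beta(1))$ and $g = \beta(1) - \mathrm{id}$. Because $\beta(s)$ and $\beta(1)$ commute, the conjugation formula shows that for any $p \in F$ the continuous curve $c \mapsto p + g(p - c)$ sends the dense set $\tau(\mathbb{R}_{>0})$ into the closed set $F$, hence sends all of $\mathbb{R}$ into $F$; this yields the rigidity $F + g(\mathbb{R}) \subset F$. If $g$ changes sign, then $g(\mathbb{R})$ is a neighborhood of $0$ and $F$ is open-and-closed, so $F \in \{\emptyset, \mathbb{R}\}$. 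If $g$ has constant sign, iterating the rigidity forces $F$ to be a half-line $[p_0, \infty)$ (or its mirror); then $\beta(1)$ preserves $\mathrm{Fix}(\beta(s)) = F + \tau(s)$ setwise, and orientation-preservation forces it to fix the endpoint $p_0 + \tau(s)$, giving $\tau(s) \geq 0$ for every $s$ --- contradicting the two-sided density of $\tau$. The case $F = \mathbb{R}$ is ruled out as in Step~1, leaving $F = \emptyset$, and the conjugation formula extends this to every $\beta(s)$.

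Now $\beta(\mathbb{R})$ acts freely by orientation-preserving homeomorphisms, so H\"older's theorem (cf.~\cite{nav}) furnishes a homeomorphism $h$ and an injective additive homomorphism $\rho : \mathbb{R} \to \mathbb{R}$ with $\beta(s) = h T_{\rho(s)} h^{-1}$. Setting $\gamma(t) := h^{-1} T_{\tau(t)} h$, the conjugation relation becomes $\gamma(t)(y + \rho(s)) - \gamma(t)(y) = \rho(ts)$, independent of $y$. Density of $\rho(\mathbb{R})$ together with continuity of $\gamma(t)$ in $y$ forces this difference to be linear in $a = \rho(s)$, so $\gamma(t)$ is affine with slope $\lambda_t = \rho(t)/\rho(1)$. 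Multiplicativity of $\lambda_t$ then makes $\tilde\rho := \rho/\rho(1)$ an additive map that is multiplicative on positives and order-preserving with $\tilde\rho(1) = 1$ --- hence the identity on $\mathbb{R}$ --- so $\rho$ is linear and $\lambda_t = t$.

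The contradiction is now immediate: for $t \neq 1$ the affine map $\gamma(t)(y) = ty + d(t)$ has a unique fixed point $y^\ast$, but $\gamma(t)(y^\ast) = y^\ast$ translates to $T_{\tau(t)}$ fixing $h(y^\ast)$, which forces $\tau(t) = 0$ and violates injectivity. The main obstacle I anticipate is the no-fixed-points step: extracting the rigidity $F + g(\mathbb{R}) \subset F$ from only commutation data, and then closing both the sign-changing and sign-constant subcases without assuming any regularity on $\beta(1)$. Once freeness of $\beta(\mathbb{R})$ is in hand, H\"older's theorem and the observation that an affine map with slope $\neq 1$ has a fixed point whereas a nonzero translation has none finish the argument in a few lines.
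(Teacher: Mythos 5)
Your strategy is genuinely different from the paper's, which splits into two cases according to whether $\phi(B_1)$ acts freely and settles each by a short dynamical argument: in the free case it conjugates the single homeomorphism $\phi(B_1)$ to $x\mapsto x+1$ and uses $A_2B_1A_2^{-1}=B_1^2$ to force a fixed point of the translation $\phi(A_2)^{-1}$; in the non-free case it moves the endpoint of a complementary interval of $\mathrm{Fix}(\phi(B_1))$ by the conjugates $A_{2^{1/n}}B_1A_{2^{1/n}}^{-1}$. Your Steps 1 and 2 check out in detail: the rigidity $F+g(\mathbb{R})\subset F$ does follow from $\beta(t)(p)=p+g(p-\tau(t))$ for $p\in F$, the density of the divisible subgroup $\tau(\mathbb{R}_{>0})$, and the closedness of $F$; and both subcases are closed correctly (sign-changing $g$ makes $F$ open, hence empty or all of $\mathbb{R}$; constant-sign $g$ makes $F$ a closed ray whose endpoint $p_0+\tau(t)$ of $\mathrm{Fix}(\beta(t))$ must be fixed by the commuting $\beta(1)$, giving $\tau\geq 0$ and contradicting two-sided density). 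This is an attractive alternative to the paper's non-free case, exploiting the full one-parameter family $\{A_t\}$ where the paper only needs the roots $A_{2^{1/n}}$.

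The genuine gap is the first sentence of Step 3: H\"older's theorem does \emph{not} furnish a homeomorphism $h$ with $\beta(s)=hT_{\rho(s)}h^{-1}$; it furnishes only a continuous non-decreasing semi-conjugacy $\varphi$ with $\varphi\circ\beta(s)=T_{\rho(s)}\circ\varphi$ (exactly as the paper states it in Lemma \ref{holder}). A free action of an abstract group is in general only semi-conjugate, not conjugate, to a group of translations (non-minimal free actions exist, e.g.\ by blowing up an orbit), and without an invertible $h$ you cannot form $\gamma(t)=h^{-1}T_{\tau(t)}h$, so the affine computation and the final fixed-point contradiction do not get off the ground. The gap is repairable in two ways. (i) Within your framework: the maximal intervals collapsed by $\varphi$ form a countable disjoint family permuted by $\beta(\mathbb{R})$, and injectivity of $\rho$ forces the stabilizer of any such interval to be trivial, so a nonempty family would give uncountably many pairwise disjoint nondegenerate intervals in $\mathbb{R}$ --- impossible; hence $\varphi$ is injective and a true conjugacy, after which your Steps 3--4 (which I verified granting the conjugacy) go through. (ii) Much more cheaply: once Step 2 shows $\beta(1)$ is fixed-point free, conjugate only the single homeomorphism $\beta(1)$ to $T_1$; the relation $\phi(A_2)\beta(1)\phi(A_2)^{-1}=\beta(1)^2$ becomes $\gamma(y+1)=\gamma(y)+2$ for the conjugated image $\gamma$ of $\phi(A_2)$, so $\gamma(y)-2y$ is periodic, $\gamma(y)-y$ changes sign, and $\gamma$ has a fixed point, forcing $\tau(2)=0$ and contradicting Step 1. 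This is precisely the paper's free case and renders your Steps 3--4 unnecessary.
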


\begin{proof}
Suppose that $\mathbb{R}_{>0}\ltimes \mathbb{R}$ act effectively on the real
line $\mathbb{R}$ with each $A_{t}$ a translation. After passing to index-2
subgroup, we assume that the group is orientation-preserving. If $B_{1}$
acts freely on $\mathbb{R},$ then it is conjugate to the translation $T:%
\mathbb{R}\rightarrow \mathbb{R},x\rightarrow x+1.$ In such a case, we have $%
A_{2}TA_{2}^{-1}=T^{2}.$ Therefore, $A_{2}^{-1}(x+2)=A_{2}^{-1}(x)+1$ for
any $x.$ Since $A_{2}^{-1}$ maps intervals of length $2$ to an interval of
length $1,$ it is contracting and thus has a fixed point.

If $B_{1}$ has a non-empty fixed point set $\mathrm{Fix}(B_{1}),$ choose $I$
to be a connected component of $\mathbb{R}\backslash \mathrm{Fix}(B_{1}).$
Suppose that $A_{2}(x)=x+a,$ a translation by some real number $a>0.$ Since $%
A_{2}=A_{2^{1/n}}^{n},$ we have $A_{2^{1/n}}(x)=x+a/n$ for each positive
integer $n.$ For each $n,$ let $F_{n}=A_{2^{1/n}}B_{1}A_{2^{1/n}}^{-1}.$
Since $A_{2^{1/n}}B_{1}A_{2^{1/n}}^{-1}$ commutes with $B_{1},$ we see that $%
F_{n}\mathrm{Fix}(B_{1})=\mathrm{Fix}(B_{1}).$ This means that either $%
F_{n}(I)=I$ or $F_{n}(I)\cap I=\emptyset .$ Since $F_{n}(x)=B_{1}(x-a/n)+a/n$
for any $x\in \mathbb{R},$ we know that $F_{n}(I)=I$ for sufficiently large $%
n.$ Without loss of generality, we assume that $I$ is of the form $(x,y)$ or 
$(-\infty ,y).$ Choose sufficiently large $n$ such that $y-a/n\in I.$ We
have 
\[
A_{2^{1/n}}B_{1}A_{2^{1/n}}^{-1}(y)=B_{1}(y-a/n)+a/n\neq y, 
\]%
which is a contradiction to the fact that $F_{n}(I)=I.$
\end{proof}

\begin{definition}
A topologically diagonal embedding of a group $G<\mathrm{Homeo}(\mathbb{R})$
is a homomorphism $\phi :G\rightarrow \mathrm{Homeo}_{+}(\mathbb{R})$
defined as follows. Choose a collection of disjoint open intervals $%
I_{n}\subset \mathbb{R}$ and homeomorphisms $f_{n}:\mathbb{R\rightarrow }%
I_{n}$. Define $\phi $ by $\phi (g)(x)=f_{n}gf_{n}^{-1}(x)$ when $x\in I_{n}$
and $\phi (g)(x)=x$ when $x\notin I_{n}.$
\end{definition}

The following is similar to a result proved by Militon \cite{mil}.

\begin{lemma}
\label{mili}(Militon \cite{mil}) Let $\Gamma =\mathrm{PSL}_{2}(\mathbb{R})$
and $\tilde{\Gamma}<\mathrm{Homeo}_{+}(\mathbb{R})$ the lifting of $\Gamma $
to the real line. Any effective action $\phi :\tilde{\Gamma}\hookrightarrow 
\mathrm{Homeo}_{+}(\mathbb{R})$ of $\tilde{\Gamma}$ on the real line $%
\mathbb{R}$ is a topological diagonal embedding$.$
\end{lemma}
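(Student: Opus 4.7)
The plan centers on the infinite cyclic center $Z(\tilde{\Gamma}) \cong \mathbb{Z}$, generated by an element $z$ that under the standard embedding $\tilde{\Gamma} \hookrightarrow \mathrm{Homeo}_+(\mathbb{R})$ acts as the translation $x \mapsto x + 1$. Since $\phi$ is injective, $\phi(z)$ is nontrivial; and since $z$ is central, the fixed set $F := \mathrm{Fix}(\phi(z))$ is preserved setwise by all of $\phi(\tilde{\Gamma})$. Consequently, the complement $\mathbb{R} \setminus F$ decomposes into a countable family of disjoint open intervals $\{I_n\}$ permuted by $\phi(\tilde{\Gamma})$, and these will be the candidate intervals for the diagonal embedding.

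First, I would upgrade ``permuted'' to ``individually preserved.'' The induced permutation homomorphism $\pi : \tilde{\Gamma} \to \mathrm{Sym}(\{I_n\})$ contains $Z$ in its kernel, so factors through the simple group $\mathrm{PSL}_2(\mathbb{R}) = \tilde{\Gamma}/Z$. Because $\phi$ is orientation-preserving, $\pi$ actually lands in the order-preserving bijections of the totally ordered set $\{I_n\}$; any finite-order element must act trivially on such a set. Since $\mathrm{PSL}_2(\mathbb{R})$ is generated by its elliptic (finite-order) elements, the image of $\pi$ is trivial, so each $I_n$ is $\phi(\tilde{\Gamma})$-invariant.

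Next, examine the restricted action $\phi_n := \phi|_{I_n} : \tilde{\Gamma} \to \mathrm{Homeo}_+(I_n)$. Its kernel is a normal subgroup of $\tilde{\Gamma}$, hence contained in $Z$; and because $\phi(z)|_{I_n}$ acts freely (as $I_n \cap F = \emptyset$), no nontrivial power of $z$ lies in the kernel, so each $\phi_n$ is faithful. Since any orientation-preserving fixed-point-free homeomorphism of an open interval is conjugate to $x \mapsto x+1$, one can choose a homeomorphism $f_n : \mathbb{R} \to I_n$ with $f_n^{-1} \phi(z) f_n (x) = x+1$. Then $f_n^{-1} \phi_n(\cdot) f_n$ is an action of $\tilde{\Gamma}$ on $\mathbb{R}$ commuting with the integer translations, and so descends to a faithful action of $\mathrm{PSL}_2(\mathbb{R})$ on $S^1 = \mathbb{R}/\mathbb{Z}$.

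The concluding step invokes the classical rigidity of $\mathrm{PSL}_2(\mathbb{R})$-actions on the circle (Ghys, Matsumoto): any faithful orientation-preserving action of $\mathrm{PSL}_2(\mathbb{R})$ on $S^1$ is topologically conjugate to the standard projective action. Lifting this conjugacy through the covering $\mathbb{R} \to S^1$ and absorbing it into the choice of $f_n$ identifies $\phi|_{I_n}$ with the standard embedding, which is exactly the data required for a topological diagonal embedding. The main obstacle is this rigidity step: eliminating exotic minimal invariant sets for a faithful $\mathrm{PSL}_2(\mathbb{R})$-action on $S^1$ is the deep part, and my plan would be either to quote Militon's argument verbatim or to recapitulate the dynamical classification of elliptic, parabolic, and hyperbolic one-parameter subgroups that underlies the projective rigidity.
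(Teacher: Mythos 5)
Your proposal is correct and follows essentially the same route as the paper: decompose $\mathbb{R}$ along $\mathrm{Fix}(\phi(z))$ for the central generator $z$, use that finite-order elements of $\tilde{\Gamma}/\langle z\rangle$ act trivially on an ordered set together with simplicity of $\mathrm{PSL}_2(\mathbb{R})$ to get invariance of the pieces, pass to the quotient circle on each component, and invoke Matsumoto's rigidity theorem. The only point to add is that the action must also be shown to fix $\mathrm{Fix}(\phi(z))$ \emph{pointwise} (not merely preserve the complementary intervals), which follows by running your torsion-plus-simplicity argument on the points of the fixed set itself, exactly as the paper does.
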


\begin{proof}
After passing to index-2 subgroup, we assume the action is
orientation-preserving. Let $\tau :\mathbb{R}\rightarrow \mathbb{R}$ be the
translation $x\rightarrow x+1.$ Suppose that $\mathrm{Fix}(\phi (\tau ))\neq
\emptyset .$ Note that $\tau $ lies in the center of $\tilde{\Gamma}.$ The
quotient group $\Gamma =\tilde{\Gamma}/\langle \tau \rangle $ acts on the
fixed point set $\mathrm{Fix}(\phi (\tau )).$ For any $f\in \Gamma $ and $%
x\in \mathrm{Fix}(\phi (\tau )),$ we denote the action by $f(x)$ without
confusions. Choose any torsion-element $f\in \Gamma $ and any $x\in \mathrm{%
Fix}(\phi (\tau )).$ We must have $x=f(x),$ for otherwise $%
x<f(x)<f^{2}(x)<\cdots <f^{k}(x)$ for any $k.$ Since $\Gamma $ is simple, we
know that the action of $\tilde{\Gamma}$ on $\mathrm{Fix}(\tau )$ is
trivial. For each connected component $I_{i}\subset \mathbb{R}\backslash 
\mathrm{Fix}(\phi (\tau )),$ we know that $\tau |_{I_{i}}$ is conjugate to a
translation. The group $\Gamma =\tilde{\Gamma}/\langle \tau \rangle $ acts
on $I_{i}/\langle \phi (\tau )\rangle =S^{1}.$ A result of Matsumoto \cite%
{ma} (Theorem 5.2) says that the group $\Gamma $ is conjugate to the natural
inclusion $\mathrm{PSL}_{2}(\mathbb{R})\hookrightarrow \mathrm{Homeo}%
_{+}(S^{1})$ by a homeomorphism $g\in \mathrm{Homeo}_{+}(S^{1}).$ Therefore,
the group $\phi (\tilde{\Gamma})|_{I_{i}}$ is conjugate to the image of the
natural inclusion $\tilde{\Gamma}\hookrightarrow \mathrm{Homeo}_{+}(\mathbb{R%
}).$
\end{proof}

For a real number $a\in \mathbb{R},$ let 
\begin{eqnarray*}
t_{a} &:&\mathbb{R}\rightarrow \mathbb{R}, \\
x &\rightarrow &x+a
\end{eqnarray*}%
be the translation. Denote by $A=\langle t_{a}:a\in \mathbb{R}\rangle ,$ the
subgroup of translations in the lifting $\tilde{\Gamma}$ of $\mathrm{PSL}%
_{2}(\mathbb{R}).$

\begin{corollary}
\label{continu}For any injective group homomorphism $\phi :\tilde{\Gamma}%
\rightarrow \mathrm{Homeo}(\mathbb{R})$, the image $\phi (A)$ is a
continuous one-parameter subgroup, i.e. $\lim_{a\rightarrow a_{0}}\phi
(t_{a})=\phi (t_{a_{0}})$ for any $a_{0}\in \mathbb{R}.$
\end{corollary}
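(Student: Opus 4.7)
The strategy is to deduce the corollary directly from Lemma \ref{mili}. First I would reduce to the case where $\phi$ lands in $\mathrm{Homeo}_+(\mathbb{R})$: since every $t_a = (t_{a/2})^2$ is a square in $\tilde{\Gamma}$, the image $\phi(A)$ automatically lies in $\mathrm{Homeo}_+(\mathbb{R})$, and the subgroup $\tilde{\Gamma}_+ := \phi^{-1}(\mathrm{Homeo}_+(\mathbb{R}))$ has index at most two in $\tilde{\Gamma}$, contains $A$, and still carries all the structure (the central translation $\tau$, the triangle group lifting, etc.) used in the proof of Lemma \ref{mili}, so the lemma can be applied to $\phi|_{\tilde{\Gamma}_+}$.

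Next, Lemma \ref{mili} supplies disjoint open intervals $I_n \subset \mathbb{R}$ and homeomorphisms $h_n : \mathbb{R} \to I_n$ such that
\[
\phi(g)(x) = h_n\bigl(g \cdot h_n^{-1}(x)\bigr) \quad (x \in I_n),
\]
and $\phi(g)(x) = x$ for $x \notin \bigcup_n I_n$, where on the right-hand side $g$ acts on $\mathbb{R}$ through the natural inclusion $\tilde{\Gamma} \hookrightarrow \mathrm{Homeo}_+(\mathbb{R})$. That natural inclusion is the universal-cover lift of the $\mathrm{PSL}_2(\mathbb{R})$-action on $S^1$; under it the subgroup $A$---which lifts the rotation subgroup $\mathrm{PSO}(2) < \mathrm{PSL}_2(\mathbb{R})$---acts by genuine translations $y \mapsto y + a$ of $\mathbb{R}$. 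Consequently, on each $I_n$ we have $\phi(t_a)(x) = h_n(h_n^{-1}(x) + a)$.

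Pointwise continuity of $a \mapsto \phi(t_a)$ is then immediate: if $x \in I_n$, then $h_n^{-1}(x) + a \to h_n^{-1}(x) + a_0$ as $a \to a_0$, and continuity of $h_n$ yields $\phi(t_a)(x) \to \phi(t_{a_0})(x)$; if $x$ lies outside every $I_n$, then $\phi(t_a)(x) = x$ is constant in $a$. The only delicate bookkeeping is the reduction to the orientation-preserving case and the verification that under the natural embedding $A$ really acts by translations (rather than merely being conjugate to one); once these two items are settled, the corollary is a formal consequence of Lemma \ref{mili}, and I do not anticipate any substantive obstacle.
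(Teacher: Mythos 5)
Your proposal is correct and follows exactly the paper's route: the paper's own proof is the two-line observation that Lemma \ref{mili} makes $\phi$ a topologically diagonal embedding, hence $\phi(A)$ is continuous. You have simply filled in the details the paper leaves implicit (the reduction to $\mathrm{Homeo}_+(\mathbb{R})$ and the fact that $A$, as the lift of the rotation subgroup, acts by genuine translations under the standard inclusion), and these details are accurate.
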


\begin{proof}
If $\phi $ is injective, the previous lemma says that $\phi $ is a
topological diagonal embedding. Therefore, $\phi (A)$ is continuous.
\end{proof}

\bigskip

We will need the following elementary fact.

\begin{lemma}
\label{cont}Let $\phi :(\mathbb{R},+)\rightarrow (\mathbb{R},+)$ be a group
homomorphism. If $\phi $ is continuous at any $x\neq 0$, then $\phi $ is $%
\mathbb{R}$-linear.
\end{lemma}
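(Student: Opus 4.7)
The plan is to reduce to the standard fact that an additive function $\mathbb{R}\to\mathbb{R}$ which is continuous at a single point is $\mathbb{R}$-linear. There are essentially three ingredients: $\mathbb{Q}$-linearity coming for free from additivity, upgrading continuity at one point to continuity everywhere, and promoting $\mathbb{Q}$-linearity to $\mathbb{R}$-linearity via density of $\mathbb{Q}$ in $\mathbb{R}$.

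First I would observe that any additive $\phi$ is automatically $\mathbb{Q}$-linear: induction on positive integers gives $\phi(nx)=n\phi(x)$, additivity gives $\phi(-x)=-\phi(x)$, and substituting $x\mapsto x/n$ in $\phi(nx)=n\phi(x)$ gives $\phi(x/n)=\phi(x)/n$, whence $\phi(qx)=q\phi(x)$ for all $q\in\mathbb{Q}$ and $x\in\mathbb{R}$.

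Next I would use the hypothesis that $\phi$ is continuous at some $x_0\neq 0$ to deduce continuity at $0$: since $\phi(x_0+h)=\phi(x_0)+\phi(h)$, the relation $\lim_{h\to 0}\phi(x_0+h)=\phi(x_0)$ forces $\lim_{h\to 0}\phi(h)=0$. Translating by any $y\in\mathbb{R}$ via additivity then yields continuity of $\phi$ at $y$, so $\phi$ is continuous on $\mathbb{R}$.

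Finally I would combine continuity with $\mathbb{Q}$-linearity: for an arbitrary $r\in\mathbb{R}$ pick a sequence $q_n\in\mathbb{Q}$ with $q_n\to r$; then
\[
\phi(r)=\lim_{n\to\infty}\phi(q_n)=\lim_{n\to\infty}q_n\phi(1)=r\phi(1),
\]
so $\phi(x)=\phi(1)\,x$ for every $x\in\mathbb{R}$. The only conceptual step is the passage from continuity at a single nonzero point to continuity at $0$, and that is immediate from additivity; I do not expect any real obstacle.
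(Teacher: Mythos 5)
Your proof is correct and follows essentially the same route as the paper: deduce $\mathbb{Q}$-linearity from additivity, then use density of $\mathbb{Q}$ together with continuity to get $\phi(r)=r\phi(1)$ for all real $r$. Your intermediate step of propagating continuity from one point to all of $\mathbb{R}$ is not actually needed under the stated hypothesis (which already gives continuity at every nonzero point, and $\phi(0)=0$ is automatic), but it is harmless and shows the lemma holds under the weaker assumption of continuity at a single point.
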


\begin{proof}
For any nonzero integer $n,$ we have $\phi (n)=n\phi (1)$ and $\phi (1)=\phi
(\frac{1}{n}n)=n\phi (\frac{1}{n}).$ Since $\phi $ is additive, we have $%
\phi (\frac{m}{n})=m\phi (\frac{1}{n})=\frac{m}{n}f(1)$ for any integer $%
m,n\neq 0.$ For any nonzero real number $a\in \mathbb{R},$ choose a rational
sequence $r_{i}\rightarrow a.$ When $\phi $ is continuous, we have that $%
\phi (r_{i})\rightarrow \phi (a)$ and $\phi (r_{i})=r_{i}\phi (1)\rightarrow
a\phi (1)=\phi (a).$ The proof is finished.
\end{proof}

The following is the classical theorem of H\"{o}lder: a group acting freely
on $\mathbb{R}$ is semi-conjugate to a group of translations (see Navas \cite%
{na}, Section 2.2.4).

\begin{lemma}
\label{holder}Let $\Gamma $ be a group acting freely on the real line $%
\mathbb{R}.$ There is an injective group homomorphism $\phi :\Gamma
\rightarrow (\mathbb{R},+)$ and a continuous non-decreasing map $\varphi :%
\mathbb{R}\rightarrow \mathbb{R}$ such that%
\[
\varphi (h(x))=\varphi (x)+\phi (h) 
\]%
for any $x\in \mathbb{R},h\in \Gamma .$
\end{lemma}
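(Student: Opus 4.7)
The plan is to reduce to the orientation-preserving case and then apply the classical H\"{o}lder theorem on Archimedean ordered groups. First I would observe that any $h\in \Gamma$ reversing orientation must have a fixed point by the intermediate value theorem applied to $h(x)-x$, so freeness forces $\Gamma$ to act by orientation-preserving homeomorphisms.

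Next I would endow $\Gamma$ with a bi-invariant total order. Fix any $x_{0}\in \mathbb{R}$ and declare $h>\mathrm{id}$ iff $h(x_{0})>x_{0}$. Freeness plus orientation-preservation guarantees this is independent of the base point: if $h(x_{0})>x_{0}$ but $h(x_{1})<x_{1}$ for some other $x_{1}$, then $h$ fixes a point between $x_{0}$ and $x_{1}$. The order is clearly left-invariant, and it is right-invariant because for any $g$, the sign of $hg(x_{0})-g(x_{0})$ equals the sign of $h(g(x_{0}))-g(x_{0})$, which by base-point independence equals the sign of $h(x_{0})-x_{0}$.

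Then I would verify the order is Archimedean: given any two positive elements $g,h\in \Gamma$, some power satisfies $g^{n}(x_{0})>h(x_{0})$. Otherwise the monotone sequence $\{g^{n}(x_{0})\}$ would be bounded above by $h(x_{0})$ and would converge to some limit $y$, yielding $g(y)=y$ and contradicting freeness. Having an Archimedean bi-invariantly ordered group, the classical H\"{o}lder embedding applies: fixing a positive $g_{0}\in \Gamma$ and normalizing $\phi(g_{0})=1$, define
\[
\phi(h)=\sup\left\{ \tfrac{p}{q}\in \mathbb{Q}: q>0,\ g_{0}^{p}\leq h^{q}\right\} ,
\]
where the Archimedean property forces the sup and the corresponding inf to agree, making $\phi:\Gamma \rightarrow (\mathbb{R},+)$ an order-preserving (hence injective) homomorphism.

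Finally I would construct the semi-conjugacy $\varphi$. On the orbit $\Gamma \cdot x_{0}$, set $\varphi(h(x_{0}))=\phi(h)$, which is well-defined and monotone by freeness and the compatibility between the orderings on $\Gamma$ and on the orbit. Extend to $\mathbb{R}$ by
\[
\varphi(x)=\sup \{\phi(h):h(x_{0})\leq x\}.
\]
The semi-conjugacy identity $\varphi(h(x))=\varphi(x)+\phi(h)$ is immediate from left-invariance of the order and the homomorphism property of $\phi$; monotonicity is built in. The only subtle point is continuity: the supremum definition produces an upper semi-continuous non-decreasing function whose possible discontinuities occur at endpoints of complementary intervals of the closure $\overline{\Gamma \cdot x_{0}}$. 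These can be absorbed by collapsing each gap to a point (equivalently, defining $\varphi$ to be constant on each complementary interval of $\overline{\Gamma \cdot x_{0}}$ and interpolating values across the gap), which is compatible with the equivariance because $\Gamma$ permutes these gaps. I expect this continuity step to be the main technical nuisance, since the action need not be minimal; but the sup-construction together with gap-collapsing handles it in a standard way and is exactly how H\"{o}lder's theorem is usually established (cf. Navas \cite{na}, Section 2.2.4).
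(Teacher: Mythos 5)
The paper does not actually prove this lemma: it is quoted as the classical theorem of H\"{o}lder and deferred entirely to Navas \cite{na}, Section 2.2.4. Your argument is precisely the standard proof found there (orientation-preservation from freeness, the bi-invariant Archimedean order at a base point, H\"{o}lder's embedding into $(\mathbb{R},+)$, and the sup-defined semi-conjugacy), and it is essentially correct. The one place to be careful is the continuity step, where your description of the fix is slightly conflated: when $\phi(\Gamma)$ is dense in $\mathbb{R}$, the function $\varphi(x)=\sup\{\phi(h):h(x_{0})\leq x\}$ is already continuous (and automatically constant on the complementary gaps of $\overline{\Gamma\cdot x_{0}}$, so no further ``collapsing'' is needed), whereas when $\Gamma$ is trivial or infinite cyclic the sup-definition produces a genuine step function, and the remedy is not to collapse gaps but to interpolate $\varphi$ monotonically on a fundamental domain $[x_{0},g_{0}(x_{0})]$ and extend equivariantly. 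You mention interpolation, so the idea is there; just separate the two cases cleanly. One more microscopic point: injectivity of $\phi$ should be justified by noting that $h>\mathrm{id}$ forces $h^{q}\geq g_{0}$ for some $q$ by the Archimedean property, hence $\phi(h)\geq 1/q>0$, rather than by ``order-preserving'' alone.
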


\begin{corollary}
\label{cor}Suppose that the affine group $\mathbb{R}_{>0}\mathbb{\ltimes R=}%
\langle a_{t}:t\in \mathbb{R}_{>0}\rangle \mathbb{\ltimes }\langle
b_{s}:s\in \mathbb{R}\rangle $ acts on the real line $\mathbb{R}$ by
homeomorphisms satisfying

(1) the action of the subgroup $\mathbb{R=}\langle b_{s}:s\in \mathbb{R}%
\rangle $ is free;

(2) for any fixed $x\in \mathbb{R}$, $a_{t}(x)$ is continuous with respect
to $t\in \mathbb{R}_{>0}.$

Let $\phi :\langle b_{s}:s\in \mathbb{R}\rangle \rightarrow (\mathbb{R},+)$
be the additive map in Lemma \ref{holder} for $\Gamma =\langle b_{s}:s\in 
\mathbb{R}\rangle $. Then $\phi $ is an $\mathbb{R}$-linear map.
\end{corollary}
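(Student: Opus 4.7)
The plan is to deduce $\mathbb{R}$-linearity from Lemma \ref{cont}. Identifying $\langle b_s : s \in \mathbb{R}\rangle$ with $(\mathbb{R},+)$ via $s \leftrightarrow b_s$, the map $\tilde\phi(s) := \phi(b_s)$ is a group homomorphism $(\mathbb{R},+) \to (\mathbb{R},+)$. By Lemma \ref{cont}, it suffices to show that $\tilde\phi$ is continuous at some nonzero point; I will work at $s = 1$.

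To produce the continuity I will exploit the H\"older semi-conjugacy $\varphi$ from Lemma \ref{holder} together with the conjugation relation in the affine group. Fix any $x_0 \in \mathbb{R}$. From $\varphi(b_s(x_0)) = \varphi(x_0) + \phi(b_s)$ we have
\[
\phi(b_s) = \varphi(b_s(x_0)) - \varphi(x_0),
\]
and the affine relation $a_t b_s a_t^{-1} = b_{ts}$, specialized at $t = s$ and $s=1$, gives $a_s b_1 a_s^{-1} = b_s$, so $b_s(x_0) = a_s\bigl(b_1(a_s^{-1}(x_0))\bigr)$. Since $\varphi$ is continuous, it is enough to show that $a_s(b_1(a_s^{-1}(x_0))) \to b_1(x_0)$ as $s \to 1$. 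Because $a_s^{-1} = a_{1/s}$, hypothesis~(2) gives $a_s^{-1}(x_0) \to x_0$, and then $y_s := b_1(a_s^{-1}(x_0)) \to b_1(x_0) =: y_1$ since $b_1$ is a homeomorphism.

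The hard part is to pass from the pointwise-in-$t$ continuity in hypothesis~(2) to the joint limit $a_s(y_s) \to y_1$. My approach is a monotonicity sandwich. First I will check that every $a_t$ is orientation-preserving: for any $x_1 < x_2$ the map $t \mapsto a_t(x_2) - a_t(x_1)$ is continuous by~(2), never zero since $a_t$ is injective, and positive at $t=1$ because $a_1 = \mathrm{id}$, so by connectedness of $\mathbb{R}_{>0}$ it remains positive throughout. Then, for any $z < y_1 < w$, we have $z < y_s < w$ for $s$ sufficiently close to $1$, whence monotonicity yields $a_s(z) < a_s(y_s) < a_s(w)$; by~(2), $a_s(z) \to z$ and $a_s(w) \to w$, so $\liminf_{s\to 1} a_s(y_s) \geq z$ and $\limsup_{s\to 1} a_s(y_s) \leq w$. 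Letting $z \uparrow y_1$ and $w \downarrow y_1$ yields $a_s(y_s) \to y_1$. This gives the continuity of $\tilde\phi$ at $s = 1$, and Lemma \ref{cont} then delivers the $\mathbb{R}$-linearity of $\phi$.
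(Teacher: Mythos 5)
Your argument is correct and follows essentially the same route as the paper's: use the semi-conjugacy identity $\phi(b_s)=\varphi(b_s(x_0))-\varphi(x_0)$ together with the relation $a_tb_sa_t^{-1}=b_{ts}$ to turn the continuity of $t\mapsto a_t(x)$ from hypothesis (2) into continuity of $s\mapsto\phi(b_s)$, and then invoke Lemma \ref{cont}. The only difference is that the paper simply asserts the limit $a_tb_sa_t^{-1}(x)\to b_s(x)$ as $t\to 1$, whereas you supply the justification this step actually needs (orientation-preservation of the $a_t$ plus the monotone sandwich, required because the argument fed to $a_t$ moves with $t$) --- a correct and worthwhile addition.
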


\begin{proof}
Note that $a_{t}b_{s}a_{t}^{-1}=b_{ts}.$ We have%
\[
\varphi (b_{ts}(x))=\varphi (x)+\phi (b_{ts}). 
\]%
Since $b_{ts}(x)=a_{t}b_{s}a_{t}^{-1}(x)\rightarrow b_{s}(x)$ when $%
t\rightarrow 1,$ we have that%
\[
\varphi (x)+\phi (b_{ts})\rightarrow \varphi (b_{s}(x))=\varphi (x)+\phi
(b_{s}). 
\]%
This implies that $\phi (b_{ts})\rightarrow \phi (b_{s}),t\rightarrow 1.$
For any nonzero $x\in \mathbb{R}$ and sequence $x_{n}\rightarrow x,$%
\[
\phi (b_{x_{n}})=\phi (b_{\frac{x_{n}}{x}x})\rightarrow \phi (b_{x}). 
\]%
The map $\phi $ is $\mathbb{R}$-linear by Lemma \ref{cont}.
\end{proof}

\bigskip

\begin{theorem}
\label{th3}Let $G=\mathbb{R}_{>0}\ltimes (\bigoplus_{i\in \mathbb{R}_{\geq
1}}\mathbb{R)}$, generated by $A_{t},B_{i,s},$ $t\in \mathbb{R}_{>0},i\in 
\mathbb{R}_{\geq 1}=[1,\infty ),s\in \mathbb{R}$ satisfying%
\begin{eqnarray*}
A_{t}B_{i,s}A_{t}^{-1} &=&B_{i,st^{\frac{i}{i+1}}}, \\
B_{i,s_{1}}B_{i,s_{2}} &=&B_{i,s_{1}+s_{2}}, \\
B_{i,s_{1}}B_{j,s_{2}} &=&B_{j,s_{2}}B_{i,s_{1}} \\
A_{t_{1}}A_{t_{2}} &=&A_{t_{1}t_{2}}
\end{eqnarray*}%
for any $t_{1},t_{2}\in \mathbb{R}_{>0},i,j\in \mathbb{R}_{\geq
1},s_{1},s_{2}\in \mathbb{R}$. Then $G$ cannot act effectively on the real
line $\mathbb{R}$ by homeomorphisms when the induced action of $\langle
A_{t}:t\in \mathbb{R}_{>0}\rangle $ is a topologically diagonal embedding of
the translation subgroup $(\mathbb{R},+)\hookrightarrow \mathrm{Homeo}(%
\mathbb{R}).$
\end{theorem}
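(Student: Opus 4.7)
Suppose for contradiction that $G$ acts effectively on $\mathbb{R}$; write $\sigma_t$ and $\beta_{i,s}$ for the homeomorphisms realizing $A_t$ and $B_{i,s}$. The topological diagonal hypothesis gives disjoint open intervals $\{I_n\}$ and orientation-preserving homeomorphisms $f_n:\mathbb{R}\to I_n$ such that $\sigma_t|_{I_n}=f_n\circ\tau_{\ln t}\circ f_n^{-1}$ and $\sigma_t=\mathrm{id}$ on $F:=\mathbb{R}\setminus\bigcup I_n$. In particular $t\mapsto\sigma_t(x)$ is continuous at every $x$, so the hypothesis of Corollary \ref{cor} on continuity of $A_t$ holds for each affine subgroup $\langle A_t,B_{i,s}\rangle\cong\mathbb{R}_{>0}\ltimes\mathbb{R}$. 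When $F=\emptyset$, the diagonal reduces to one interval $I_1=\mathbb{R}$, $\sigma_t$ is globally conjugate to a translation, and Lemma \ref{lema2} applied directly to $\langle A_t,B_{i,s}\rangle$ produces a contradiction; assume henceforth $F\neq\emptyset$, fix $i\in\mathbb{R}_{\geq 1}$, and split on whether $\{B_{i,s}\}_s$ acts freely on $\mathbb{R}$.

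Case A (free action). Corollary \ref{cor} makes the H\"older homomorphism linear, $\phi_i(B_{i,s})=c_is$ with $c_i\neq 0$, and there is a continuous non-decreasing semi-conjugacy $\varphi_i$ with $\varphi_i\circ\beta_{i,s}=\varphi_i+c_is$. Translating $\sigma_t\beta_{i,s}\sigma_t^{-1}=\beta_{i,st^{i/(i+1)}}$ through $\varphi_i$ shows $\varphi_i\circ\sigma_t$ is another H\"older semi-conjugacy for $\{\beta_{i,s}\}$ with coefficient $c_it^{i/(i+1)}$; by uniqueness of H\"older semi-conjugacies up to post-composition with an affine map, $\varphi_i\circ\sigma_t=t^{i/(i+1)}\varphi_i+\gamma_i(t)$. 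Picking $x_0\in F$ and normalizing $\varphi_i(x_0)=0$ forces $\gamma_i(t)=0$. For $x=f_n(z)\in I_n$ this yields $\varphi_i(f_n(z+\ln t))=t^{i/(i+1)}\varphi_i(f_n(z))$, hence $\varphi_i(f_n(z))=C_{n,i}e^{iz/(i+1)}$; continuity of $\varphi_i$ at $f_n(+\infty)\in F$ forces $C_{n,i}=0$. The same scaling relation applied to $x\in\mathrm{int}(F)$ (where $\sigma_t(x)=x$) gives $\varphi_i(x)=0$, and since $\bigcup I_n\cup\mathrm{int}(F)$ is dense in $\mathbb{R}$, continuity forces $\varphi_i\equiv 0$, contradicting $c_i\neq 0$.

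Case B (not free). Some $\beta_{i,s_0}$, $s_0\neq 0$, has non-empty fixed set. Adapting Lemma \ref{lema2}, choose a connected component $I$ of $\mathbb{R}\setminus\mathrm{Fix}(\beta_{i,s_0})$ and set $F_n:=\sigma_{2^{1/n}}\beta_{i,s_0}\sigma_{2^{1/n}}^{-1}=\beta_{i,s_n}$ where $s_n=s_0\cdot 2^{i/(n(i+1))}\to s_0$. Since $F_n$ commutes with $\beta_{i,s_0}$, it preserves $\mathrm{Fix}(\beta_{i,s_0})$, so either $F_n(I)=I$ or $F_n(I)\cap I=\emptyset$; pointwise continuity of $\sigma_t$ in $t$ gives $F_n\to\beta_{i,s_0}$ pointwise, so for large $n$, $F_n(I)=I$ and $F_n$ fixes the endpoints of $I$. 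If an endpoint $y$ of $I$ lies in some $I_m\subset U$, then $w_n:=\sigma_{2^{1/n}}^{-1}(y)\in I\cap I_m$ is distinct from $y$ and $F_n(y)=y$ forces $\beta_{i,s_0}(w_n)=w_n$, contradicting $w_n\in I$. If $I\subset F$ entirely, then for any $x\in I$ we have $\beta_{i,s_0}(x)\in I\subset F$, so $\sigma_t\beta_{i,s_0}\sigma_t^{-1}(x)=\beta_{i,s_0}(x)$ gives $\beta_{i,st^{i/(i+1)}}(x)=\beta_{i,s_0}(x)$ for every $t$; choosing $t$ with $t^{i/(i+1)}=2$ and using $\beta_{i,2s_0}=\beta_{i,s_0}^{2}$ forces $\beta_{i,s_0}(x)=x$, a contradiction.

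The principal obstacle is the remaining subcase of Case B in which $I$ has both endpoints in $F$ but $I\cap U\neq\emptyset$; the $F_n$-argument fails because each $F_n$ fixes the endpoints of $I$ trivially. In this subcase $I$ must contain an entire $I_m\subset U$ (no $I_m$ can straddle $\partial I\subset F$), and the plan is to show that the $\{B_{i,s}\}$-orbit of $I_m$ is contained in $I\cap U$, so that $\{B_{i,s}\}$ preserves $I_m$ and acts freely on it (since $\mathrm{Fix}(\beta_{i,s})\cap I_m=\emptyset$ transports under $\sigma_t$); one can then restrict the affine subgroup to $I_m$, where $A_t$ is an honest translation, and invoke Lemma \ref{lema2} directly. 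An alternative route, if the preservation of $I_m$ is not automatic, is to exploit the commutation $B_{i,s}B_{j,s'}=B_{j,s'}B_{i,s}$ for some $j\neq i$ with $j/(j+1)\neq i/(i+1)$, forcing incompatible asymptotic formulas on any orbit leaving $F$ and ruling out the bad fixed-point configuration.
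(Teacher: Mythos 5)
There is a genuine gap, and in fact the overall strategy cannot work. Your Cases A and B each try to derive a contradiction from a \emph{single} index $i$, i.e.\ from the affine subgroup $\langle A_t, B_{i,s}\rangle\cong\mathbb{R}_{>0}\ltimes\mathbb{R}$ acting with $\langle A_t\rangle$ topologically diagonal. But that situation is realizable: the standard affine action $x\mapsto t^{i/(i+1)}x+s$ on $\mathbb{R}$ is effective, $\beta_{i,s}$ acts freely, and $\sigma_t$ is a topologically diagonal embedding of translations with $I_1=(0,\infty)$, $I_2=(-\infty,0)$, $F=\{0\}$. So Case A's claimed contradiction is false, and one can locate where your argument breaks: (i) the Hölder semi-conjugacy is \emph{not} unique up to affine post-composition for non-minimal free actions (different orbits can be shifted independently), so the identity $\varphi_i\circ\sigma_t=t^{i/(i+1)}\varphi_i+\gamma_i(t)$ is unjustified; and (ii) the step ``continuity of $\varphi_i$ at $f_n(+\infty)\in F$ forces $C_{n,i}=0$'' silently assumes every $I_n$ is bounded, which fails exactly in the example above. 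Case B is, by your own admission, incomplete: the subcase where both endpoints of $I$ lie in $F$ but $I$ meets $\bigcup I_m$ is only a ``plan,'' and the proposed fix (restricting to an $I_m$ preserved by all $B_{i,s}$) is not established.

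The missing idea is that the obstruction is not visible in any single affine subgroup; you must play two indices against each other. The paper's route: for each $i$ it shows (using Lemma \ref{lema2} on a component where $A_t$ is an honest translation, plus conjugation by suitable $A_t$) that on each component of the complement of the fixed set, $\langle B_{i,s}\rangle$ acts either trivially or freely, so effectiveness yields a component $I_i$ on which it acts freely. A nesting argument shows $I_i\cap I_j$ is $I_i$ or empty, and uncountability of the index set forces $I_i=I_j$ for some $i\neq j$. Then $\mathbb{R}\oplus\mathbb{R}$ (the $i$- and $j$-summands) acts freely on $I_i$, Hölder's theorem gives an injective homomorphism $\phi:\mathbb{R}\oplus\mathbb{R}\to(\mathbb{R},+)$, and Corollary \ref{cor} (using continuity of $t\mapsto A_t(x)$ from the diagonal hypothesis) makes $\phi$ linear on each summand; an injective linear map $\mathbb{R}^2\to\mathbb{R}$ is impossible. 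Your ``alternative route'' in the last sentence gestures at using a second index $j$, which is the right instinct, but as written it is not a proof.
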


\begin{proof}
Suppose that $G$ acts effectively on $\mathbb{R}$ with the induced action of 
$\langle A_{t}:t\in \mathbb{R}_{>0}\rangle $ a topologically diagonal
embedding of the translation subgroup $(\mathbb{R},+)\hookrightarrow \mathrm{%
Homeo}(\mathbb{R}).$ Let $I$ be a connected component of $\mathbb{R}%
\backslash \mathrm{Fix}(\langle A_{t},B_{i,s}:t\in \mathbb{R}_{>0},i=1,s\in 
\mathbb{R}\rangle ).$

Suppose that there is an element $B_{1,s}$ having a fixed point $x\in I$ for
some $s>0.$ Since $A_{4}B_{1,s}A_{4}^{-1}=B_{1,s}^{2},$ we know that $%
A_{4}x\in \mathrm{Fix}(B_{1,s})=\mathrm{Fix}(B_{1,s}^{2}).$ Since there are
no fixed points in $I$ for $\langle A_{t},B_{1,s}:t\in \mathbb{R}_{>0},s\in 
\mathbb{R}\rangle ,$ we know that $\lim_{n\rightarrow \infty
}A_{4}^{n}x\notin I$ (otherwise, $\lim_{n\rightarrow \infty }A_{4}^{n}x\in I.
$ But $A_{t}(\lim_{n\rightarrow \infty }A_{4}^{n}x)=\lim_{n\rightarrow
\infty }A_{4}^{n}x$ for any $t>0$ by the topologically diagonal embedding.
For any $s^{\prime },$ we have $B_{1,s^{\prime }}=A_{s^{\prime
2}s^{-2}}B_{1,s}A_{s^{\prime 2}s^{-2}}^{-1}$ and $B_{1,s^{\prime
}}(\lim_{n\rightarrow \infty }A_{4}^{n}x)=\lim_{n\rightarrow \infty
}A_{4}^{n}x.$ This would imply that $\lim_{n\rightarrow \infty }A_{4}^{n}x$
is a global fixed point of $\langle A_{t},B_{1,s}:t\in \mathbb{R}_{>0},s\in 
\mathbb{R}\rangle $). This implies that $A_{4}$ has no fixed point in $I.$
Since the group homomorphism $\langle A_{t}:t\in \mathbb{R}_{>0}\rangle
\rightarrow \mathrm{Homeo}(\mathbb{R})$ is a diagonal embedding, we see that
each $A_{t}$ has no fixed point in $I$ and the action of $\langle A_{t}:t\in 
\mathbb{R}_{>0}\rangle $ on $I$ is conjugate to a group of translations. By
Lemma \ref{lema2}, the affine group $\langle A_{t},B_{1,s}:t\in \mathbb{R}%
_{>0},s\in \mathbb{R}\rangle $ cannot act effectively on $I$. Suppose that $%
A_{t}B_{1,s^{\prime }}$ acts trivially on $I$ for some $t>0$ and $s^{\prime
}>0.$ We have that $A_{t}B_{1,s}=A_{s^{2}s^{\prime -2}}(A_{t}B_{1,s^{\prime
}})A_{s^{2}s^{\prime -2}}^{-1}$ acts trivially on $I.$ But $%
A_{t}B_{1,s}(x)=A_{t}(x)=x$ implies that $t=1.$ Therefore, the element $%
B_{1,s}$ (and any $B_{1,t}=A_{t^{2}s^{-2}}B_{1,s}A_{t^{2}s^{-2}}^{-1},t\in 
\mathbb{R}_{>0}$) acts trivially on $I.$ This means that the action of $%
\langle B_{1,s}:s\in \mathbb{R}\rangle $ on the connected component $I$ is
either trivial or free. Since the action of $G$ is effective, there is a
connected component $I_{1}$ on which $B_{1,s}$ acts freely. A similar
argument shows that $B_{i,s^{\prime }}$ acts freely on a component $I_{i}$
for each $i\in \mathbb{R}_{\geq 1}$ and any $s^{\prime }\in \mathbb{R}%
\backslash \{0\}.$

Since $B_{i,s^{\prime }}$ commutes with $B_{j,s},$ we have $B_{i,s^{\prime
}}(I_{1})\subset \mathbb{R}\backslash \mathrm{Fix}(\langle B_{j,s}:s\in 
\mathbb{R}\rangle ).$ Moreover, $B_{i,s^{\prime }}(I_{j})\cap I_{j}$ is
either $I_{j}$ or the empty set. Suppose that $I_{i}\cap I_{j}\neq \emptyset
\ $and the right end $b_{i}$ of $I_{i}$ lies in $I_{j}.$ Choose $x\in
I_{i}\cap I_{j}.$ Note that $B_{j,s}([x,b_{i}))\cap \lbrack
x,b_{i})=\emptyset $ for any $s>0.$ This is impossible as $%
B_{j,s/n}(x)\rightarrow x,n\rightarrow \infty .$ Therefore, we have $%
I_{i}\cap I_{j}=I_{i}$ or empty for distinct $i,j\in \mathbb{R}_{\geq 1}$.
Since we have uncountably many $i\in \mathbb{R}_{>0},$ there must be some
distinct $i,j\in \mathbb{R}_{\geq 1}$ such that $I_{i}=I_{j}$. This means
that the subgroup $\mathbb{R}\bigoplus \mathbb{R}$ spanned by the $i,j$%
-components acts freely on $I_{i}.$ H\"{o}lder's theorem (cf. Lemma \ref%
{holder}) gives an injective group homomorphism $\phi :\mathbb{R}\bigoplus 
\mathbb{R}\rightarrow (\mathbb{R},+)$ and a continuous non-decreasing map $%
\varphi :\mathbb{R}\rightarrow \mathbb{R}$ such that%
\[
\varphi (h(x))=\varphi (x)+\phi (h) 
\]%
for any $x\in \mathbb{R}.$ Since $\langle A_{t}:t\in \mathbb{R}_{>0}\rangle
\rightarrow \mathrm{Homeo}(\mathbb{R})$ is a topological embedding, we have
that for any fixed $x\in \mathbb{R}$, $A_{t}(x)$ is continuous with respect
to $t\in \mathbb{R}_{>0}.$ By Corollary \ref{cor}, the restriction map $\phi
|_{\mathbb{R}}$ is $\mathbb{R}$-linear for each direct summand $\mathbb{R}$.
This is a contradiction to the fact that $\phi $ is injective. Therefore,
the group $G\ $cannot act effectively.
\end{proof}

\bigskip

\begin{proof}[Proof of Theorem \protect\ref{th2}]
Suppose that $\mathrm{QI}^{+}(\mathbb{R})$ acts on the real line by an
injective group homomorphism $\phi :\mathrm{QI}^{+}(\mathbb{R})\rightarrow 
\mathrm{Homeo}(\mathbb{R}).$ The group $\mathrm{QI}^{+}(\mathbb{R})$
contains the semi-direct product $\mathbb{R}_{>0}\ltimes (\bigoplus_{i\in 
\mathbb{R}_{\geq 1}}\mathbb{R)}$, by Lemma \ref{lema1}. The subgroup $%
\mathbb{R}_{>0}$ (as the image of the exponential map) is a homomorphic
image of the subgroup $\mathbb{R}<\tilde{\Gamma},$ which is the lifting of $%
\mathrm{SO}(2)/\{\pm I_{2}\}<\mathrm{PSL}_{2}(\mathbb{R})$ to $\mathrm{Homeo}%
(\mathbb{R}).\ $Note that $\tilde{\Gamma}$ is embedded into $\mathrm{QI}^{+}(%
\mathbb{R})$ (see Corollary \ref{corsar} and its proof). By Lemma \ref{mili}%
, any effective action of $\tilde{\Gamma}$ on the real line $\mathbb{R}$ is
a topological diagonal embedding. This means that the action of $\mathbb{R}%
_{>0}$ is a topological diagonal embedding (cf. Corollary \ref{continu}).
Theorem \ref{th3} shows that the action of $\mathbb{R}_{>0}\ltimes
(\bigoplus_{i\in \mathbb{R}_{\geq 1}}\mathbb{R)}$ is not effective.
\end{proof}

\bigskip

\noindent \textbf{Acknowledgements}

The authors would like to thank Li Cai and Xiaolei Wu for their helpful
discussions. This work is supported by NSFC (No. 11971389).

\bigskip

\bigskip

NYU Shanghai, 1555 Century Avenue, Shanghai, 200122, China.

NYU-ECNU Institute of Mathematical Sciences at NYU Shanghai, 3663 Zhongshan
Road North, Shanghai, 200062, China

E-mail: sy55@nyu.edu

\bigskip

Department of Pure Mathematics, Xi'an Jiaotong-Liverpool University, 111 Ren
Ai Road Suzhou Jiangsu, China

E-mail: Yanxin.Zhao19@student.xjtlu.edu.cn

\end{document}